\newtheorem{alphtheorem}{Theorem}
\newtheorem{theorem}{Theorem}
\newtheorem{lemma}{Lemma}
\newtheorem{definition}{Definition}
\newtheorem{claim}{Claim}
\newtheorem{proposition}{Proposition}
\newtheorem{corollary}{Corollary}
\tikzstyle{vertex}=[circle, draw, inner sep=0pt, minimum size=6pt]
\DeclareMathOperator\Sa{\mathcal{S}}
\DeclareMathOperator\G{\mathcal{G}}
\DeclareMathOperator\I{\mathcal{I}}
\DeclareMathOperator\LL{\mathcal{L}}
\DeclareMathOperator\M{\mathcal{M}}
\DeclareMathOperator\F{\mathcal{F}}
\DeclareMathOperator\A{\mathcal{A}}
\DeclareMathOperator\B{\mathcal{B}}
\DeclareMathOperator\C{\mathcal{C}}
\def\isdef{\mbox {$\ \stackrel{\rm def}{=} \ $}}
\def\isdef{\mbox {$\ \stackrel{\rm def}{=} \ $}}
\title{  A Composition-Based Approach to EKR Problems}
\author{Javad B. Ebrahimi  and Ali Taherkhani}
\address{ J. B. Ebrahimi, 
	Department of Mathematical Sciences, Sharif University of Technology, Tehran, Iran}
\email{javad.ebrahimi@sharif.ir}
\address{A. Taherkhani, 
	Department of Mathematics, Institute for Advanced Studies in Basic Sciences (IASBS), Zanjan 45137-66731, Iran}
\email{ali.taherkhani@iasbs.ac.ir}
\begin{document}

\maketitle
\begin{abstract}

Let $\mathcal{A}$ be a family of subsets of a finite set.   
A subfamily of $\mathcal{A}$ is said to be intersecting   when any two of its members contain at least one common element.
We say that $\mathcal{A}$ is  an   Erd{\H o}s-Ko-Rado (EKR) family  if, for every element $x$ of the set, the subfamily consisting of all members of $\mathcal{A}$ that contain $x$ has the maximum cardinality among all  intersecting subfamilies of $\mathcal{A}$.
 If these subfamilies are the only maximum  intersecting subfamilies of $\mathcal{A}$, then $\mathcal{A}$ is called a strong EKR family.
In this article, we introduce a compositional framework to establish the EKR and strong EKR properties in set systems when some subfamilies are known to satisfy the EKR or strong EKR properties. 
Our method  is powerful enough to yield simpler proofs for several existing results, 
including those derived from Katona’s cycle method (1968), Borg–Meagher's admissible ordering method  (2016),  related results on  the family of permutations studied by Frankl–Deza (1977) and the family of perfect matchings of complete graphs of even order investigated by  Meagher–Moura (2005).
To demonstrate the applicability and effectiveness of our method when other existing methods have not been successful, we show that for every fixed $r$-uniform hypergraph $H$ and all sufficiently large integers $n$, the family of all subhypergraphs of the complete $r$-uniform hypergraph on $n$ vertices that are isomorphic to $H$ satisfies the strong EKR property, where two copies of $H$ are considered intersecting if they share at least one common hyperedge.
Moreover, when the structural constraint $H$ is restricted to be a cycle, we establish a series of EKR results for families of cycles in the complete graph $K_n$ and  the complete bipartite graph $K_{n,n}$ for a broad range of the parameter $n$. 
\end{abstract}

\section{Introduction}

\noindent In the sequel, $|X|$ stands for the size of the finite set $X$ and $[n]$ is a shorthand for the set $\{1,2,\ldots,n\}$.
To fix  notations, let us start with the following definition concerning {\it intersecting families} of sets and some of their most important properties.

\begin{definition}[Intersecting family, EKR and strong EKR properties]\label{def:EKRfamily}	
		Given a finite set $X$,
	\begin{itemize}
		\item [i)] Let $\mathcal{B}$ be a family of the subsets of $X$. 
		The family $\mathcal{B}$ is called an \textit{intersecting family} {\rm (}\textit{resp. $t$-intersecting family}{\rm )} if for every pair $A, B\in \mathcal{B}$ we have $A\cap 
	B\neq \varnothing$  {\rm(}resp. $|A\cap B|\geq t${\rm)}.  When a subfamily $\mathcal{B}$ of a family $\mathcal{A}$ is intersecting, we call 
	$\mathcal{B}$ an \textit{intersecting subfamily} of $\mathcal{A}$.
		\item[ii)] 
		A   family $\mathcal{A}$ of subsets of $X$ has the \textit{EKR property} if for any $x\in X$ the subfamily $\mathcal{A}_x$ of all elements of $\A$ containing $x$, has the maximum size among 
		all intersecting subfamilies of $\mathcal{A}$. When a family $\mathcal{A}$  has the EKR property, it is called an \textit{EKR family}.
		\item[iii)] If every  intersecting subfamily of $\mathcal{A}$ having the maximum size  is equal to $\mathcal{A}_x$ for some $x\in X$, we say that $\mathcal{A}$ has the \textit{strong EKR property}. When a family $\mathcal{A}$  has the strong EKR property, it is called a\textit{ strong EKR family}.
	\end{itemize}
\end{definition}
Clearly, for  an EKR family  $\mathcal{A}$ and any two elements $\{x,y\} \in X$ we have $|\mathcal{A}_x|=|\mathcal{A}_y|$. 

Our basic motivation to define the EKR property is the fundamental result of Erd{\H o}s-Ko-Rado that, strictly speaking, states that  if $n\geq 2k$ then the family of all $k$-subsets of an $n$-element set $X$, denoted by ${X\choose k}$,  has the  EKR property. Furthermore,    ${X\choose k}$ has the strong EKR property whenever $n> 2k$.
This theorem, as one of the cornerstones of extremal combinatorics was proved in 1938 and published in 1961 by Erd{\H o}s, Ko, and Rado  \cite{EKR61}. Let us recall this theorem and its generalization by Wilson \cite{wilson} (also see \cite{zbMATH03621717}) as follows.

\begin{alphtheorem}\label{thm:EKR}
	Let $n,k$, and $t$ be three  positive integers, $X$ be an $n$-element set and $\mathcal{A}$  be a $t$-intersecting subfamily  of $k$-subsets of $X$.  
	\begin{itemize}
		\item [{\rm i)}] {\rm(Erd{\H o}s-Ko-Rado Theorem \cite{EKR61})}  If $t=1$ and $n\geq 2k$, then 
		$|\mathcal{A}|\leq \binom{n-1}{k-1}$. Furthermore, when $n> 2k$, equality holds if and only if  $\mathcal{A}$ consists of all the $k$-subsets of $X$ containing a fixed element of $X$. 
       \item [{\rm ii)}]{\rm (Wilson Theorem \cite{wilson})} If $n\geq (t+1)(k-t+1)$, 	then  $|\mathcal{A}|\leq \binom{n-t}{k-t}$. Furthermore, 
    	when $n> (t+1)(k-t+1)$, equality holds if and only if $\mathcal{A}$ consists of all $k$-subsets of $X$ that contain
    	a fixed $t$-subset of  $X$. 
	\end{itemize}

\end{alphtheorem}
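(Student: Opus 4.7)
For part (i), my plan is to use Katona's cyclic double-counting argument. Given a cyclic ordering $\pi$ of $X$, call a $k$-subset an \emph{arc} of $\pi$ if its elements occupy $k$ consecutive positions along $\pi$. The first step is a local observation: when $n \geq 2k$, any intersecting collection of arcs of a fixed $\pi$ has size at most $k$. This follows by fixing one arc, say $\{1,\ldots,k\}$ in the cyclic order; any arc that meets it has starting position in the window $\{2-k,\ldots,k\}$ of size $2k-1$, and the pairs of positions $(s,s+k)$ give $k-1$ pairs of disjoint arcs plus the one fixed position, so at most $k$ starting positions can be chosen pairwise-intersectingly. The second step is to double-count incidences $(\pi, A)$ with $A \in \mathcal{A}$ and $A$ an arc of $\pi$. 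Since each fixed $A$ is an arc of exactly $k!(n-k)!$ of the $(n-1)!$ cyclic orderings and each $\pi$ contributes at most $k$ members of $\mathcal{A}$, one obtains $|\mathcal{A}| \cdot k!(n-k)! \leq k \cdot (n-1)!$, which rearranges to $|\mathcal{A}| \leq \binom{n-1}{k-1}$.

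For the uniqueness statement when $n > 2k$: equality forces the local bound of $k$ intersecting arcs to be saturated in every cyclic ordering. A separate analysis of the extremal configurations of the local lemma shows that, when $n > 2k$, the only maximum intersecting families of $k$-arcs on a cycle are the \emph{stars} (all arcs passing through a common position). Propagating this condition across enough cyclic orderings forces every set in $\mathcal{A}$ to contain a single common element of $X$.

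Part (ii) is substantially harder and the cyclic argument does not extend gracefully to the $t$-intersecting setting. Instead I would follow Frankl's shifting approach: introduce the compression operators $S_{ij}$ that replace $j$ by $i$ within a set whenever the result is not already present, verify that these operators preserve $|\mathcal{A}|$ and the $t$-intersecting property, and reduce to the case of a fully shifted family. One then establishes $|\mathcal{A}| \leq \binom{n-t}{k-t}$ for shifted families by induction on $k-t$, splitting $\mathcal{A}$ according to whether or not a set contains the element $1$ and applying the inductive hypothesis to the link and the trace at $1$. The main obstacle is the uniqueness clause when $n > (t+1)(k-t+1)$: near-extremal competitors such as $\{A \in \binom{X}{k} : |A \cap Y| \geq t+1\}$ for a fixed $(t+2)$-set $Y$ have size quite close to $\binom{n-t}{k-t}$, so proving that only the star family attains equality requires delicate combinatorial bookkeeping to separate these competitors from the true optimum. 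I expect this uniqueness step in part (ii) to be the most technically involved part of the argument.
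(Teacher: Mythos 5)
You should first note that the paper does not prove this statement at all: Theorem~A is quoted as classical background, with part~(i) attributed to Erd\H{o}s--Ko--Rado \cite{EKR61} and part~(ii) to Wilson \cite{wilson}, and both are used later as black boxes (e.g.\ with $t=2$ for cliques and $t=r$ for copies of $K_{n_0}^{(r)}$). The only related argument in the paper is the corollary after the composition lemma, which rederives the $t=1$ bound by packaging Katona's cycle count as an EKR chain; your part~(i) is exactly that classical Katona argument, and it is correct for the inequality. Your uniqueness sketch for $n>2k$ is thinner: saying that equality ``propagates across enough cyclic orderings'' is the right idea, but the actual argument (characterizing maximum intersecting families of arcs as stars when $n>2k$, then showing two overlapping cyclic orders force a common element of $X$) needs to be written out; as stated it is a plausible plan, not a proof.

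The genuine gap is in part~(ii). The shifting-plus-induction scheme you describe (compress to a shifted family, induct on $k-t$ via the link and trace at the element $1$) does not deliver the exact threshold $n\ge (t+1)(k-t+1)$: this is precisely where the problem resisted combinatorial attack. Frankl's shifting method gave the bound at that threshold only for $t\ge 15$, and the general case --- including the equality characterization you acknowledge as the hardest step --- was proved by Wilson with an algebraic argument (an eigenvalue/ratio bound for a suitable matrix in the Johnson scheme), not by the induction you outline; a purely combinatorial route for all parameters only came later with the Ahlswede--Khachatrian generating-set machinery. The competitor family $\{A:\,|A\cap Y|\ge t+1\}$ for a $(t+2)$-set $Y$ that you mention is indeed the obstruction (it ties the star exactly at $n=(t+1)(k-t+1)$, which is why strictness of the inequality $n>(t+1)(k-t+1)$ is needed for uniqueness), but your proposal stops at naming it. So part~(ii) is a research program rather than a proof, and the specific inductive step as described would fail to close at the stated constant; for the purposes of this paper the correct resolution is simply to cite Wilson, as the authors do.
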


It is worth mentioning that  Wilson’s result was later extended by Ahlswede and Khachatrian for the case $n<  (t+1)(k-t+1)$ \cite{AhKh}.
Several generalizations and interesting alternative proofs of the  Erd{\H o}s-Ko-Rado theorem
have already appeared within the literature (e.g. see \cite{DeFr77,CaKu,LaMa,GoMeperm,BoMe1,GoMematch,DeFr1983, MR2784326, AhKh, FrWi, MR4066717,Talbot, MR0480051, MR1415313, FRANKL20121388, HilMil67, KATONA1972183, MR2285800, MR2202076, wilson, HK, BoLe, MR2156694}) while one of the approaches for generalization has been to establish similar results for other mathematical objects (families), such as the family of permutations \cite{DeFr77,CaKu,GoMeperm,LaMa,MR2202076, MR2784326}, vector spaces \cite{FrWi, HSIEH19751}, matchings in graphs \cite{GoMematch,MR4066717,BoMe1} or signed sets \cite{BoLe, MR2285800}.

Let us go through some related results in this regard before we mention our general framework.
 Two permutations $\sigma$ and $\pi$ on $[n]$  are said to be  intersecting if there exists an element $i  \in [n]$ such that $\sigma(i)=\pi(i)$.
An intersecting family of permutations is a subset of permutations such that every pair of its elements intersect.
In 1977, Frankl and Deza   showed that the maximum size of an intersecting family of permutations on $[n]$  is  $(n-1)!$
\cite{DeFr77}. They  conjectured that every maximum intersecting set is a coset of a point stabilizer, i.e. the set of all permutations sending a specific element 
to a fixed element. This conjecture is proved by Cameron and Ku \cite{CaKu} and independently by 
Larose and Malvenuto \cite{LaMa}. Later, Godsil and Meagher provided an alternative proof in \cite{GoMeperm}.

It is worth noting that  a permutation on $[n]$ corresponds  to a perfect matching in the complete bipartite graph $K_{n,n}$.
We say two matchings in a  graph  are intersecting if they have at least one common edge.
 Consequently, every intersecting family of permutations corresponds to 
an intersecting family of perfect matchings in the complete bipartite graph $K_{n,n}$. The Deza and Frankl result show that the family of all perfect matchings in
$K_{n,n}$ is an EKR family. The later result by Cameron and Ku \cite{CaKu} and independently by 
Larose and Malvenuto \cite{LaMa} showed that this family is a strong EKR family.

It is interesting that similar results hold when one extends the ambient graph to the complete graph $K_{n}$. In particular,
Meagher and Moura proved that in the complete graph on an even number of vertices, the largest 
 intersecting family of perfect matchings is precisely the family of all perfect matchings that
share one specific edge~\cite{MR2156694}. In other words, when 
$n$ is even, the family of all perfect matchings of $K_{n}$   is a strong EKR family.
Later, Godsil and Meagher presented an algebraic proof of this result \cite{GoMematch}, while Kamat and Misra extended this result to the family of $k$-matchings. 
Namely,  they showed that for even $n$ and $k < \tfrac{n}{2}$, the family of   all copies of  $k$-matchings in the complete graph $K_n$ is a strong EKR family~\cite{Kamat}. After that,   
Borg and Meagher showed that the same statement is also true when $n$ is odd and $k<\lfloor\tfrac{n}{2}\rfloor$ (see Theorems~13 and 14 in~\cite{BoMe1}).
Note that when  $k=\lfloor\tfrac{n}{2}\rfloor$,  Borg and Meagher's approach in \cite{BoMe1} does not yield the same result.

It is quite clear from this graph theoretic point of view that one may ask about the correctness of similar results when the ambient graph is replaced by a complete hypergraph or
when the structural condition is modified from being a matching to being an isomorphic copy
 of a fixed given graph $H$. In this regard, one of the first obstacles for generalization seems to be the fact that the most powerful existing methods, as Katona's cycle method or its variants (e.g. see \cite{BoMe1}), are not necessarily applicable within this general setting, primarily because a key assumption in these methods is the existence of \emph{admissible orderings}, i.e. a certain cyclic arrangement of the objects such that every consecutive $k$-interval of that ordering forms an element of the family (for a more detailed discussion see Subsection~\ref{sec:Katona}).

In what follows, we identify each subgraph/subhypergraph with its edge/hyperedge set as the ground set, i.e. all intersections are taken in the ground set, unless otherwise specified.
For instance, within this setting and by definition, a family $\mathcal{A}$ of  subgraphs of the  complete graph $K_n$ is called an \textit{EKR family},  if for every edge $e$ in $K_n$, the subfamily of  $\mathcal{A}$ consisting of all  elements that contain $e$ attains the maximum possible size among all intersecting subfamilies of $\mathcal{A}$. 
This terminology extends naturally to families of subhypergraphs of the complete $r$-uniform hypergraph $K_n^{(r)}$. 

For the ambient graph $K_n$, Theorem~\ref{thm:cycle} shows that  the family of all copies of the $k$-cycle $C_k$, satisfies the strong EKR property when $n$ is linearly larger than $k$.

\begin{theorem}\label{thm:cycle}
Let $n$ and $k$ be two positive integers.  Let $\mathcal{F}_n(C_k)$ denote the family of all $k$-cycles in $K_{n}$.
\begin{itemize}
\item[(i)] 
For any $n \geq 6$, the family $\mathcal{F}_n( C_3)$ is an EKR family, and for any $n \geq 7$, it is a strong EKR family.
\item[(ii)] For any $n\geq 24$,  
  the family  $\mathcal{F}_n( C_4)$ is an EKR family, and for any $n\geq 24$, it is a strong EKR family.
\item[(iii)] Let $k\geq 5$. 
For any  $n\geq 3k-3$,   $\mathcal{F}_n( C_k)$ is an EKR family, and for  any  $n\geq 3k-2$, it is a strong EKR family.
\end{itemize}
\end{theorem}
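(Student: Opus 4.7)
My plan is to treat part (i) by classical arguments and parts (ii)--(iii) by applying the composition-based framework developed in earlier sections of the paper.

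For part (i), I take an arbitrary intersecting subfamily $\mathcal{I} \subseteq \mathcal{F}_n(C_3)$, fix any $T_0 \in \mathcal{I}$, and split on whether some single edge of $T_0$ is common to every member of $\mathcal{I}$. In the \emph{star} case, $|\mathcal{I}| \le n-2$, with equality only for $\mathcal{I} = (\mathcal{F}_n(C_3))_e$. In the \emph{non-star} case, I can find $T_1, T_2 \in \mathcal{I}$ sharing different edges with $T_0$, and the requirement $T_1 \cap T_2 \ne \emptyset$ (as edge sets) forces every triangle of $\mathcal{I}$ to lie inside a single $K_4$ subgraph, so $|\mathcal{I}| \le 4$. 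Since $4 \le n-2$ for $n \ge 6$ and $4 < n-2$ for $n \ge 7$, the EKR and strong EKR statements follow.

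For parts (ii) and (iii) I decompose $\mathcal{F}_n(C_k) = \bigsqcup_{V \in \binom{[n]}{k}} \mathcal{F}_V$ by vertex set and feed this decomposition into the compositional framework. Given an intersecting $\mathcal{I}$ with support $W := \{V : \mathcal{I} \cap \mathcal{F}_V \ne \emptyset\}$, a first easy observation is that every two vertex sets in $W$ share at least two vertices, since otherwise no cycle on $V$ could share an edge with any cycle on $V'$. I then split into a \emph{star regime}, where every cycle in $\mathcal{I}$ contains one common edge $e$ and the star bound $(n-2)(n-3)\cdots(n-k+1)$ applies with equality only for $\mathcal{I} = (\mathcal{F}_n(C_k))_e$, and a \emph{non-star regime}. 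In the non-star regime, the composition lemma lets me bound $\mathcal{I}$ by controlling how the support $W$ distributes, and a case analysis shows $W$ must sit inside one of a bounded number of structured configurations (such as all $k$-supersets of a fixed $(k-1)$-set, or all $k$-subsets of a fixed $(k+1)$-set); together with the local behaviour of each $\mathcal{I}_V$, this forces $|\mathcal{I}|$ to be strictly smaller than the star bound. Balancing the two gives the thresholds $3k-3$ (EKR) and $3k-2$ (strong EKR) for part (iii), and $n \ge 24$ for part (ii).

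The main obstacle I expect is part (ii): when $k=4$, every $\mathcal{F}_V$ is itself an intersecting family, since any two $4$-cycles on four vertices share two edges, so the composition lemma does not shrink $\mathcal{I}_V$ on a single vertex set. This leads to extra large configurations --- for instance, all $15$ of the $4$-cycles supported on any fixed $5$-vertex set are pairwise intersecting --- and the bulk of the work will be to enumerate such configurations carefully and verify that even their unions are strictly dominated by the star bound $(n-2)(n-3)$ once $n \ge 24$, which is precisely what forces that larger constant. The uniqueness half of the strong EKR claim will follow from tracking equality cases through the same analysis.
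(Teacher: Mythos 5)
Your part (i) is correct: the star/$K_4$ dichotomy is exactly the structure of $2$-intersecting families of $3$-sets, and it yields the same thresholds the paper gets by simply quoting Wilson's theorem with $t=2$. For $k\ge 5$ your skeleton (decompose by vertex support, i.e.\ by the unique $k$-clique containing each cycle, use the local behaviour inside each clique, recombine) is the paper's argument: one checks that $\bigl(\mathcal{F}_n(C_k),\mathcal{F}_n(K_k),\{\subseteq\}\bigr)$ is an EKR chain, where the clique family is (strong) EKR by Wilson with $t=2$ --- this is precisely where $3k-3$ and $3k-2$ come from, a point your ``balancing'' sentence leaves unexplained --- and the Hamiltonian cycles of each $K_k$ form an EKR family by the Walecki/near-$1$-factorization argument. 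However, your non-star analysis contains a false step: the support $W$ is only a $2$-intersecting family of $k$-sets, and for $k\ge 4$ such families are \emph{not} confined to ``all $k$-supersets of a fixed $(k-1)$-set or all $k$-subsets of a fixed $(k+1)$-set''; that classification is special to the $k=3$, $t=2$ situation you used in part (i). For example, a Hilton--Milner-type intersecting family of $k$-cycles has no common edge yet its support spreads over a large part of the star of a vertex pair, so the case analysis you describe does not go through. No structure theorem for $W$ is needed: the correct combination step is pure counting --- $|W|\le\binom{n-2}{k-2}$ by Wilson, at most $(k-2)!$ cycles of $\mathcal{I}$ inside each clique by the local EKR property, hence $|\mathcal{I}|\le\binom{n-2}{k-2}(k-2)!$, the star size --- plus, for uniqueness, the strong EKR property of the clique family and the fact that every clique $Q_1$ and edge $xy\in E(Q_1)$ admit a clique $Q_2$ with $E(Q_1)\cap E(Q_2)=\{xy\}$; this is exactly what the Composition Lemma packages.

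The serious gap is $k=4$. You correctly identify the obstruction (the three $4$-cycles on a fixed $4$-set are pairwise intersecting, so Hamiltonian cycles of $K_4$ do not form an EKR family), but your remedy --- enumerating ``large configurations'' such as the fifteen $4$-cycles on a $5$-set and comparing with $(n-2)(n-3)$ --- is not a proof: as above, non-star intersecting families of $4$-cycles need not lie in any bounded list of configurations, and in any case such small configurations are already beaten by the star once $n\ge 7$, so they cannot be the source of the constant $24$. The paper's key idea, absent from your proposal, is to change the intermediate family: compose over copies of $K_9$ instead of $K_4$. Since the edge set of $K_9$ decomposes into $4$-cycles, the $G$-balanced lemma shows that the $4$-cycles inside a fixed $K_9$ form an EKR family, and the outer family $\mathcal{F}_n(K_9)$ is (strong) EKR by Wilson with $t=2$, $k=9$, which is exactly where $n\ge 24=3\cdot 8$ (and $n\ge 25$ for the special-chain uniqueness step) comes from. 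Without some such replacement for the failing inner decomposition, your part (ii) has no working mechanism.
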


It is interesting that a similar result is also true when the ambient graph is the compete bipartite graph $K_{n,n}$.

\begin{theorem}\label{thm:bipcycle}
Let $n$ and $k\geq 2$ be positive integers.  Let $\mathcal{B}_n( C_{2k})$ denote the family of all $2k$-cycles in $K_{n,n}$.
For any   $n\geq 2k$, the family  $\mathcal{B}_n( C_{2k})$ is an EKR family, and for any   $n> 2k$, it is a strong EKR family. 
\end{theorem}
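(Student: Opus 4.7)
The plan is to mirror the approach used for Theorem~\ref{thm:cycle}: decompose $\mathcal{B}_n(C_{2k})$ according to the vertex support of each cycle, establish the strong EKR property for each local Hamilton-cycle family, and then lift to the global family via the composition framework developed earlier in the paper. Since every $2k$-cycle in $K_{n,n}$ is balanced-bipartite, spanning exactly $k$ vertices on each side, for each pair $(A,B)$ with $A\in\binom{V_1}{k}$ and $B\in\binom{V_2}{k}$ we let $\mathcal{H}(A,B)$ be the subfamily of all $2k$-cycles whose vertex set is $A\cup B$, i.e.\ the Hamilton cycles of the induced $K_{A,B}\cong K_{k,k}$. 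This gives a partition $\mathcal{B}_n(C_{2k})=\bigsqcup_{(A,B)}\mathcal{H}(A,B)$ of the global family into $\binom{n}{k}^2$ pieces, each of common size $\tfrac12 k!(k-1)!$, and the star size of any fixed edge of $K_{n,n}$ is constant by symmetry.

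First I would establish that each $\mathcal{H}(A,B)$ is itself a strong EKR family. For $k=2$ this is trivial, since $|\mathcal{H}(A,B)|=1$. For $k\ge 3$, I would either produce a Katona-style cyclic arrangement of the $k^2$ edges of $K_{k,k}$ in which every window of $2k$ consecutive edges is a Hamilton cycle (a Walecki-type Hamilton decomposition of $K_{k,k}$ juxtaposed appropriately) or, alternatively, decompose each Hamilton cycle into its two alternating perfect matchings and reduce to the strong EKR theorem for permutations of Cameron--Ku and Larose--Malvenuto, already recalled in the introduction. Either route yields that the maximum intersecting subfamilies of $\mathcal{H}(A,B)$ are exactly the stars through a single edge of $K_{A,B}$.

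With this local strong EKR in hand I would invoke the composition framework of the earlier sections. Its hypotheses are straightforward to check here: the $(\mathrm{Sym}(V_1)\times\mathrm{Sym}(V_2))$-action on $K_{n,n}$ is transitive on edges and permutes the pieces $\mathcal{H}(A,B)$ in a compatible way; the local stars have uniform size across all edges; and, through a classical-EKR reduction on the marginal $k$-subsets of $V_1$ and $V_2$ induced by $C\mapsto (V(C)\cap V_1,V(C)\cap V_2)$, the condition $n\ge 2k$ rules out any non-star intersecting subfamily from exceeding the star bound, while the strict inequality $n>2k$ is needed for uniqueness by Theorem~\ref{thm:EKR}(i). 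The composition framework then delivers the EKR conclusion for $n\ge 2k$ and the strong EKR conclusion for $n>2k$.

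The main obstacle I expect is the local strong EKR for $\mathcal{H}(A,B)$. The bipartite analogue of Katona's method requires exhibiting an explicit cyclic edge ordering of $K_{k,k}$ whose $2k$-windows are Hamilton cycles, which is less standard than in the $K_n$ setting and must respect the parity inherent to bipartite cycles; the perfect-matching route avoids this but requires care in ensuring that the two constituent matchings of a maximum-intersecting cycle family interact consistently with the ambient EKR structure. Once this local step is in place, the composition framework absorbs the threshold conditions and the uniqueness for $n>2k$ essentially automatically, since global uniqueness is forced by classical EKR uniqueness applied once on each side.
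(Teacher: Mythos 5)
Your global architecture is the same as the paper's: the decomposition by vertex support is exactly the middle family $\mathcal{B}_n(K_{k,k})$, your ``marginal'' classical-EKR argument is the paper's Claim~\ref{claim}, and the lift is Lemma~\ref{lem:composition}. The genuine gap is in your local step, and neither of your proposed routes closes it. Route (a) is impossible for $k\ge 3$: if the $k^2$ edges of $K_{k,k}$ were arranged cyclically so that every window of $2k$ consecutive edges is a Hamilton cycle, then shifting a window by one position deletes one edge of a Hamilton cycle, leaving a Hamilton path whose two endpoints are the ends of the deleted edge; the only way to add a single edge to this path and obtain a $2k$-edge Hamilton cycle is to re-insert that very edge, so the appended edge would have to coincide with the deleted one, which is absurd when $k^2>2k$. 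This is precisely the obstruction the paper records in Subsection~\ref{sec:Katona} for cycle families, so the Katona/Borg--Meagher route is closed. Route (b) is not a reduction: splitting each Hamilton cycle into its two alternating perfect matchings turns an intersecting family of cycles into a family of \emph{pairs} of permutations with only a cross-intersection property (for any two pairs, some permutation of one agrees somewhere with some permutation of the other), and the Cameron--Ku/Larose--Malvenuto theorem does not apply to that situation; no argument is given for it, and in fact the strong EKR property for Hamilton cycles of $K_{k,k}$ is neither proved nor used anywhere in the paper.

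Moreover, you are demanding more than the composition framework needs: in Definition~\ref{def:EKRchain} the local families are only required to be EKR, while ``specialness'' concerns the middle family --- that $\mathcal{B}_n(K_{k,k})$ is strong EKR (your marginal argument, i.e.\ Claim~\ref{claim}, which needs $n>2k$ via Theorem~\ref{thm:EKR}) and that for every edge $xy$ of a copy $Q_1$ there is another copy $Q_2$ with $E(Q_1)\cap E(Q_2)=\{xy\}$, a condition you should check explicitly (it holds since $n>2k$ allows choosing the two vertex classes of $Q_2$ to meet those of $Q_1$ in exactly one vertex each). The paper obtains the local EKR property by a different device, Proposition~\ref{prop:Hamiltonian}(ii): take the cyclic $1$-factorization $N_0,\dots,N_{k-1}$ of $K_{k,k}$ and the $k$ Hamilton cycles $N_i\cup N_{i+1}$; every edge lies in exactly two of them and at most two of them pairwise intersect, so the Hamilton cycles of $K_{k,k}$ form an $(S_k\times S_k,2)$-balanced family and are EKR by Lemma~\ref{lem:gbalanceisEKR}. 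Substituting that (or any correct proof of local EKR) for your local step, and keeping your global argument, recovers the theorem; as written, however, the local step of your proposal fails.
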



To show that our method is also quite capable of handling general structural constaints as well,  we will show that Theorem~\ref{thm:bipcycle} can be extended to the very general case when the structural constraint is an arbitrary connected bipartite graph $H$, by just paying the penalty of choosing $n$ sufficiently large, as follows.
  
\begin{theorem}\label{thm:bipgraphEKR}
Let $H$ be a connected bipartite graph. Then, there exists a constant $n_0(H)$ such that for every $n\geq n_0(H)$, 
the family $\B_n(H)$ consisting of all copies of $H$ in $K_{n,n}$ is a strong EKR family. 
\end{theorem}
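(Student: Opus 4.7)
\emph{Proof proposal.} The plan is to apply the compositional framework developed in the earlier sections of the paper to the family $\B_n(H)$. Set $v := |V(H)|$, $m := |E(H)|$, fix the bipartition $V(H) = X \sqcup Y$ with $|X| = a$, $|Y| = b$, and fix a reference edge $e^\star = x^\star y^\star \in E(H)$. Each copy $\widehat H \in \B_n(H)$ arises from an injective bipartition-preserving homomorphism $\phi : H \to K_{n,n}$, and we associate to it the \emph{designated edge} $\phi(e^\star) \in E(K_{n,n})$ together with the residual embedding of $H - e^\star$; this yields a natural projection $\pi : \B_n(H) \to E(K_{n,n})$ whose fibres are combinatorially uniform. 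A direct count, fixing which pair $(x',y') \in E(H)$ is sent to a prescribed edge $e \in E(K_{n,n})$ and then extending over the remaining $v - 2$ vertices, gives $|\B_n(H)_e| = C_H\, n^{v-2} + O(n^{v-3})$ for an explicit positive constant $C_H$ depending only on $H$, while $|\B_n(H)| = D_H\, n^v + O(n^{v-1})$; the star thus occupies a $\Theta(n^{-2})$-fraction of the total.

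Next, let $\F \subseteq \B_n(H)$ be a maximum intersecting subfamily. Via $\pi$ the composition principle reduces the desired bound $|\F| \leq |\B_n(H)_{e_0}|$ to the analysis of the fibres $\pi^{-1}(e)$: each fibre is a set of copies of $H$ sharing a common edge and is therefore automatically pairwise intersecting, so one only needs to control how $\F$ spreads across distinct fibres. Concretely, I would split into two regimes. In the popular-edge regime, where some edge $e_0 \in E(K_{n,n})$ lies in at least $(1 - \varepsilon) |\F|$ members of $\F$, any member of $\F \setminus \F_{e_0}$ must intersect every member of $\F_{e_0}$; a counting argument using the connectivity of $H$ forces $\F \setminus \F_{e_0}$ to be empty as soon as $n$ is large enough in terms of $H$, so $\F = \F_{e_0}$ is a star. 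In the unpopular-edge regime, where every edge of $K_{n,n}$ is contained in fewer than $(1 - \varepsilon)|\F|$ members, averaging over $\pi$ combined with the Hilton--Milner-type slack built into the composition principle upgrades the trivial bound to $|\F| = o(n^{v-2})$, which is strictly less than the star size once $n \geq n_0(H)$.

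The hard part will be the unpopular-edge regime, precisely because the admissible-ordering and Katona cycle methods (discussed in the introduction) are unavailable for a generic $H$. Handling it requires the composition framework in full: one exploits both the decomposition of each copy via $e^\star$ and the propagation of the intersection condition through $\pi$, iterating the argument edge-by-edge along a spanning tree of $H$ so as to feed off the strong EKR property of the simpler subfamilies produced at each step. The connectivity of $H$ enters crucially here, since it guarantees that—unlike the disconnected/matching case—two copies of $H$ that share a single edge necessarily interact on a whole neighbourhood, which is what powers the $o(n^{v-2})$ bound. For the strong EKR conclusion, once the unpopular regime is excluded the uniqueness analysis from the popular-edge regime is immediate: $\F \setminus \F_{e_0} = \varnothing$, so $\F$ coincides with the star $\B_n(H)_{e_0}$, as required.
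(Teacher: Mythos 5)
Your proposal does not follow the paper's route, and as written it has a genuine gap at its central step. The paper proves the theorem by composition: by H\"aggkvist's theorem there is an $n_0$ such that $K_{n_0,n_0}$ decomposes into edge-disjoint copies of $H$; one then takes $\mathcal{M}=\B_n(K_{n_0,n_0})$, shows via Claim~\ref{claim} (which reduces to the classical EKR theorem applied separately to the two sides) that $\mathcal{M}$ is a strong EKR family, shows via the $G$-balanced lemma (Lemma~\ref{lem:gbalanceisEKR}, using the decomposition and the transitive action of $S_{n_0}\times S_{n_0}\times\mathbb{Z}_2$) that the copies of $H$ inside each fixed $Q\cong K_{n_0,n_0}$ form an EKR family, and then verifies that $\big(\B_n(H),\B_n(K_{n_0,n_0}),\{\subseteq\}\big)$ is a special EKR chain, so Lemma~\ref{lem:composition} finishes the proof. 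Your plan instead attempts a direct stability argument on $\B_n(H)$, and the decisive claim --- that in the ``unpopular-edge regime'' every maximum intersecting family has size $o(n^{v-2})$ --- is simply asserted. The phrase ``Hilton--Milner-type slack built into the composition principle'' does not correspond to anything in the framework: Lemma~\ref{lem:composition} requires a \emph{regular} relation, i.e.\ $L\sim M$ forces $L\subseteq M$ with $\mathcal{M}$ a family of \emph{larger} sets that is itself (strong) EKR, whereas your projection $\pi$ sends a copy of $H$ to a single edge, so the fibres $\pi^{-1}(e)$ cannot play the role of the families $\mathcal{L}^{^{(i)}}_{_M}$ and no part of the composition lemma yields the claimed $o(n^{v-2})$ bound. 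Likewise, ``iterating edge-by-edge along a spanning tree of $H$'' is not a defined argument; what you would actually need here is a Hilton--Milner-type theorem for intersecting families of $H$-copies, which is essentially the theorem itself and is nowhere proved. (The easy bound in that regime, obtained by fixing one member $L_0\in\mathcal{F}$ and noting $\mathcal{F}\subseteq\bigcup_{e\in L_0}\B_n(H)_e$, only gives $O(n^{v-2})$, which does not beat the star.)

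Two further points. First, the popular-edge regime is also incomplete: knowing only that $e_0$ lies in $(1-\varepsilon)|\mathcal{F}|$ members does not immediately let you conclude that a member avoiding $e_0$ must meet ``every'' member of $\mathcal{F}_{e_0}$ in a way that can be killed by counting, unless you already know $\mathcal{F}_{e_0}$ is a positive fraction of the \emph{full} star $\B_n(H)_{e_0}$; this needs to be argued. Second, your proposal never identifies the ingredient that makes the problem tractable for arbitrary $H$, namely a decomposition theorem ($K_{n_0,n_0}$ into copies of $H$) feeding the $(G,1)$-balanced lemma inside a bounded-size host; without some such input, the local EKR property for $H$-copies has no source, which is exactly why the admissible-ordering methods fail here. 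Incidentally, connectivity of $H$ plays no role in the paper's argument, whereas your sketch leans on it for the key (unproved) estimate --- another sign that the proposed route is not the one that closes.
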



Following the same line of thought, we prove the following theorem, that extends Theorem~\ref{thm:cycle} to a substantially more general setting of considering the family of all copies of an $r$-uniform hypergraph $H$ in the complete $r$-uniform hypergraph $K_n^{(r)}$.

\begin{theorem}\label{thm:hypergraphEKR}
Let $H$ be an $r$-uniform hypergraph. Then, there exists a constant $n_0(H)$ such that for every $n\geq n_0(H)$, 
the family  $\F_n(H)$ consisting of all  copies of $H$ in $K_{n}^{(r)}$ is a  strong EKR family.
\end{theorem}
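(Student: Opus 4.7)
The plan is to follow a Hilton--Milner style covering argument, which will plug into the paper's compositional framework. Throughout, let $v_H = v(H)$ and $\mu_H = |E(H)|$. For any fixed hyperedge $e_0$ of $K_n^{(r)}$, a standard vertex-extension count shows that the number of copies of $H$ in $K_n^{(r)}$ containing $e_0$ is a polynomial in $n$ of degree $v_H - r$ with a positive leading coefficient depending only on $H$; by symmetry this count is independent of $e_0$, so every star $\F_n(H)_{e_0}$ has the same size $\Theta(n^{v_H - r})$. It therefore suffices to prove that every intersecting subfamily $\A \subseteq \F_n(H)$ with $\bigcap_{H' \in \A} E(H') = \emptyset$ satisfies $|\A| = O_H(n^{v_H - r - 1})$; for $n$ sufficiently large this is strictly less than the star size, and since every maximum intersecting family is then forced to be a star, EKR and strong EKR are obtained simultaneously.

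The main technical input is a two-edge counting estimate: for any two distinct hyperedges $e \neq e'$ of $K_n^{(r)}$, the number $N_H(e,e')$ of copies of $H$ in $K_n^{(r)}$ containing both $e$ and $e'$ is $O_H(n^{v_H - r - 1})$. This holds because $|e \cup e'| \geq r+1$, so at most $v_H - r - 1$ vertices of any such copy are still free to range over the ground set. To apply this, I fix any $H_1 \in \A$ and, exploiting that $\A$ is not a star, for each $e \in E(H_1)$ select a witness $H^{(e)} \in \A$ with $e \notin E(H^{(e)})$. Any $H' \in \A \setminus \{H_1\}$ shares some edge $e \in E(H_1)$ with $H_1$; since $H'$ also intersects $H^{(e)}$ while $e \notin E(H^{(e)})$, the copy $H'$ must contain $e$ together with some $e' \in E(H^{(e)})$, and automatically $e' \neq e$. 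This yields
\[
|\A| \;\leq\; 1 + \sum_{e \in E(H_1)} \sum_{e' \in E(H^{(e)})} N_H(e,e') \;\leq\; 1 + \mu_H^{2} \cdot O_H(n^{v_H - r - 1}) \;=\; O_H(n^{v_H - r - 1}),
\]
which is the bound we sought.

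The step I expect to be the main obstacle is establishing the two-edge count $N_H(e,e') = O_H(n^{v_H - r - 1})$ in a sufficiently uniform way: the precise value depends on how many vertices $e$ and $e'$ share, on which pair of edges of $H$ the pair is mapped to, and on the automorphism structure of $H$. One therefore has to sum over all combinatorial ``attachment types'' of $(e,e')$ inside a copy of $H$ and verify that each contributes $O(n^{v_H - r - 1})$. A secondary, and conceptually more interesting, task is to phrase the decomposition $\A \setminus \{H_1\} = \bigcup_{e \in E(H_1)} \A_e$ as an instance of the paper's compositional framework, so that the uniqueness half of the strong EKR conclusion is delivered by the framework's master lemma rather than by the ad hoc covering above.
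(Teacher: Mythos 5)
Your argument is correct, but it is a genuinely different proof from the one in the paper. You prove the statement by a direct Hilton--Milner/stability-type count: every star $\F_n(H)_{e_0}$ has size $\Theta_H(n^{v_H-r})$, while any intersecting subfamily with empty common edge-intersection is, by your witness construction $H^{(e)}$ for $e\in E(H_1)$, covered by the families of copies of $H$ through two prescribed distinct hyperedges, each of size $O_H(n^{v_H-r-1})$ because $|e\cup e'|\ge r+1$ pins down at least $r+1$ vertices of the copy; for large $n$ this forces every maximum intersecting subfamily to be a full star, giving EKR and strong EKR at once. The covering step is sound (note $H'\ne H^{(e)}$ automatically since $e\in E(H')\setminus E(H^{(e)})$, so even $H_1$ is captured), and the ``attachment type'' bookkeeping you worry about is routine: summing over the $O_H(1)$ ways the vertices of $e\cup e'$ can sit inside a copy of $H$ only changes the implicit constant. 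The paper instead runs the result through its compositional machinery: it invokes the Glock--K\"uhn--Lo--Osthus theorem to get an $n_0$ with $K_{n_0}^{(r)}$ decomposable into copies of $H$, shows $\F_n(K_{n_0}^{(r)})$ is strongly EKR via Wilson's theorem with $t=r$, shows each fiber $\LL_Q$ is EKR via the $(S_{n_0},1)$-balanced lemma using that decomposition, and then applies the special-EKR-chain version of the composition lemma. Your route is more elementary and self-contained (no design-existence theorem, no Wilson), and in principle yields an explicitly estimable $n_0(H)$ from the counting constants, whereas the paper's route trades elementarity for illustrating its general framework and inherits an $n_0$ governed by the decomposition threshold. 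Two cosmetic points you should fix when writing this up: assume $H$ has at least one hyperedge (otherwise the statement degenerates), and since the paper identifies copies with their hyperedge sets, replace $v_H$ by the number of non-isolated vertices of $H$ in the exponents; the order-of-magnitude gap of one power of $n$ is unaffected. Your closing wish to recast the covering as an EKR chain is unnecessary for the proof --- the uniqueness conclusion already follows from the size gap.
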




As mentioned before, proving these results using the available means and methods existing within the literature does not seem to be a straight forward task, while we believe that our proposed composition method, discussed in detail in Section~\ref{sec:composition}, is not only strong enough to provide the above mentioned results but also it has the potential to be used in a various of different ways in different settings to obtain results about EKR and strong EKR families of some other structured families of sets.

In this regard, strictly speaking, the composition method constructs new EKR families from simpler ones in the sense that, given $\mathcal{L}$ as a family of $k$-subsets of a ground set $X$ along with an  EKR family $\mathcal{M}$ of $m$-subsets of $X$ with $k\leq m$, if for each $M \in \mathcal{M}$ there exists an EKR family $\mathcal{L}_M\subseteq \LL$ defined over the ground set $M$, then subject to some uniformity conditions, one may merge the EKR families $\mathcal{L}_M\subseteq \LL$ into $\mathcal{L}$ while preserving the EKR property. Intuitively, the method guaranties that if a family is uniformly covered through a larger EKR family while each part is EKR, then the whole family is EKR as well. The next section is dedicated to the details of such a construction.

\section{Composition Framework}\label{sec:composition}

In this section, we develop a composition framework to derive EKR-type results. This section consists of two subsections, each of which describes a lemma, namely,
the composition lemma and the $G$-balanced lemma, that will be used to establish EKR-type results in the subsequent sections.

Each subsection begins with essential definitions, followed by the formal statement of the lemma where the proofs of these lemmas are postponed to Section~\ref{sec:Defer}.
After introducing each lemma, we present illustrative examples demonstrating its applications, while in particular,   we provide new proofs of some previously known results, a couple of which are notably shorter and easier to follow.

\subsection{The Composition Lemma}
\hfill
\vspace{0.2cm}

Composing mathematical structures in a suitable way is a well-established approach in mathematics, often employed to construct new objects that satisfy desired properties. In what follows, we are going to  present a method to compose EKR families to form a new EKR family. Let us begin with a definition.

\begin{definition}
Let $\ell\leq m\leq n$ be positive integers. 
Let $\mathcal{L}$ and $\mathcal{M}$ be families of $\ell$-subsets and $m$-subsets of an $n$-element set $X$, respectively. 
\begin{itemize}
\item[i)]
A relation $\sim$ from $\mathcal{L}$ to $\mathcal{M}$ is said to be  regular, if for any $L \in \mathcal{L}$ and $M \in \mathcal{M}$, the condition $L \sim M$ implies $L \subseteq M$.
Also, if $\mathcal{I}$ is a finite set of indices and for every $i\in \mathcal{I}$  the relation   $\sim_i$ is a regular relation from $\mathcal{L}$ to $\mathcal{M}$, then we say
 $\sim_{_{\mathcal{I}}} \isdef\{\sim_i | i\in \mathcal{I}\}$ is a  family of  regular relations, from $\mathcal{L}$ to $\mathcal{M}$.
\item[ii)]
 Let  $\sim_{\I}$ be  a family of regular relations from $\mathcal{L}$ to $\mathcal{M}$. 
For  every $i\in \I$, $L\in \mathcal{L}$ and $M\in \mathcal{M}$, we define 
\[
\mathcal{M}^{^{(i)}}_{_L}\isdef\{ M\in\mathcal{M}|\,\, L\sim_i M\}.
\]
and 

\[
\mathcal{L}^{^{(i)}}_{_M}\isdef\{ L\in \mathcal{L}|\,\,  L\sim_i M\}.
\]
\end{itemize}
\end{definition}

The next definition introduces the central concept of the compositional framework.

\begin{definition}[EKR chain and special EKR chain]\label{def:EKRchain}
 Let $\ell\leq m\leq n$ be positive integers. 
Let $\mathcal{L}$ and $\mathcal{M}$ be  families of $\ell$-subsets and $m$-subsets of an  $n$-element set $X$, respectively. Assume that $\sim_{\I}$ 
 is a family of  regular relations  from $\LL$ to $\M$.
\begin{itemize}
 \item[(1)]
A triple $(\mathcal{L},\mathcal{M},\sim_{\I})$  is called an \textit{EKR chain}  if the following conditions are satisfied:
\begin{itemize}
\item[(i)] The family $\mathcal{M}$ is an EKR family. 
\item [(ii)] For every $M\in \mathcal{M}$ and $i\in \mathcal{I}$, the family $\mathcal{L}^{^{(i)}}_{_M}$ is an EKR family.
\item[(iii)] For every $M,M'\in \mathcal{M}$ and $i,j\in \I$, we have $|\mathcal{L}^{^{(i)}}_{_M}|=|\mathcal{L}^{^{(j)}}_{_{M'}}|>0$.
\item[(iv)] For every $L,L'\in \mathcal{L}$, we have $\sum\limits_{i\in\I}|\mathcal{M}^{^{(i)}}_{_L}|=\sum\limits_{i\in\I}|\mathcal{M}^{^{(i)}}_{_{L'}}|$.
\end{itemize}
\item[(2)] Let  $(\mathcal{L},\mathcal{M},\sim_{\I})$ be an EKR chain. 
The triple $(\mathcal{L},\mathcal{M},\sim_{\I})$  is called a \textit{special EKR chain}  if the  following two conditions are satisfied:

\begin{itemize}
\item[(i)] The family $\mathcal{M}$ is a strong EKR family. 
\item[(ii)] For every $M\in \mathcal{M}$ and for every $x\in M$, there exists an  $M'\in \mathcal{M}$ such that
$M\cap M'=\{x\}$.
\end{itemize}
\end{itemize}

\end{definition}

The following lemma is a key technical tool for proving the main results of this paper and is of independent interest in its own right.
\begin{lemma}[Composition Lemma]\label{lem:composition}
 Let $\ell\leq m\leq n$ be positive integers. Consider an $n$-element set $X$.
 Let $\mathcal{L}$ and $\mathcal{M}$ be  families of $\ell$-subsets and $m$-subsets of $X$, respectively, and
  let $\sim_{\I}$ be a  family  of regular relations from $\LL$ to $\M$.
\begin{itemize}
\item[(i)] If $(\mathcal{L},\mathcal{M},\sim_{\I})$ is an EKR chain, then $\mathcal{L}$ is an EKR family. 
\item[(ii)] If  $(\mathcal{L},\mathcal{M},\sim_{\I})$ is a special EKR chain, then  $\mathcal{L}$ is a strong EKR family.
\end{itemize}
\end{lemma}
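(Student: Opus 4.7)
The plan is to prove both parts by a two-level double-counting argument that lifts intersecting information from $\LL$ up to $\M$. First I extract common constants from the hypotheses: condition~(iii) makes $t=|\LL^{(i)}_M|$ independent of $(M,i)$, so counting incidences $(y,L)$ with $y\in L\in\LL^{(i)}_M$ together with the EKR property of $\LL^{(i)}_M$ shows that every star $(\LL^{(i)}_M)_y$ has the common size $s=\ell t/m$; condition~(iv) makes $d=\sum_i|\M^{(i)}_L|$ independent of $L$; and the EKR property of $\M$ makes $|\M_x|$ independent of $x$. A double count of triples $(L,M,i)$ with $x\in L\sim_i M$ then gives $|\LL_x|\,d=s\,|\I|\,|\M_x|$, so $|\LL_x|$ is likewise independent of $x$. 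For~(i), fix intersecting $\A\subseteq\LL$ and set $\A_{M,i}=\A\cap\LL^{(i)}_M$ and $\B_i=\{M\in\M:\A_{M,i}\ne\varnothing\}$. Since $\LL^{(i)}_M$ is EKR and $\A_{M,i}$ is intersecting, $|\A_{M,i}|\le s$; moreover any two witnesses $L\in\A_{M,i}$, $L'\in\A_{M',i}$ force $\varnothing\ne L\cap L'\subseteq M\cap M'$ by regularity and intersectingness of $\A$, so $\B_i$ is intersecting in the EKR family $\M$ and $|\B_i|\le|\M_x|$. Combining,
\[
|\A|\,d\;=\;\sum_{i\in\I}\sum_{M\in\M}|\A_{M,i}|\;\le\; s\sum_{i\in\I}|\B_i|\;\le\; s\,|\I|\,|\M_x|\;=\;|\LL_x|\,d,
\]
so $|\A|\le|\LL_x|$, proving~(i).

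For~(ii), assume $|\A|=|\LL_x|$; then the three inequalities above must all be equalities, so $|\A_{M,i}|=s$ for every $M\in\B_i$ and $|\B_i|=|\M_x|$ for every $i$. Since $\M$ is strong EKR, each $\B_i$ is itself a star, say $\B_i=\M_{x_i}$ for some $x_i\in X$. Using the special condition, I claim that $x_i\in L$ for every $L\in\A$ with $\M^{(i)}_L\ne\varnothing$: pick $M\in\M^{(i)}_L$ (so $L\subseteq M$ and $x_i\in M$), and suppose toward a contradiction that $x_i\notin L$; the special condition furnishes some $M'\in\M$ with $M\cap M'=\{x_i\}$, hence $M'\in\M_{x_i}=\B_i$ and some $L'\in\A_{M',i}\subseteq\A$ satisfies $L'\subseteq M'$, whence the intersecting property of $\A$ forces $\varnothing\ne L\cap L'\subseteq M\cap M'=\{x_i\}$, contradicting $x_i\notin L$.

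The main obstacle is then to conclude that $x_i$ does not depend on $i$; once all $x_i$ coincide with a single $x$, the containment $\A\subseteq\LL_x$ combined with $|\A|=|\LL_x|$ gives $\A=\LL_x$. The plan for this step is to iterate the special condition: assuming $x_i\ne x_j$, take $M_0\in\M_{x_i}\cap\M_{x_j}$ (handling $\M_{x_i}\cap\M_{x_j}=\varnothing$ as a degenerate case by exploiting the disjointness of $\B_i$ and $\B_j$) and apply the special condition at $(M_0,x_i)$ to produce $M_\alpha\in\M_{x_i}$ with $x_j\notin M_\alpha$, since $x_j\in M_0\cap M_\alpha=\{x_i\}$ would force $x_j=x_i$; symmetrically one constructs $M_\beta\in\M_{x_j}$ with $x_i\notin M_\beta$. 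The induced witnesses $L_\alpha\in\A_{M_\alpha,i}$, $L_\beta\in\A_{M_\beta,j}$ then satisfy $x_i\in L_\alpha\setminus L_\beta$ and $x_j\in L_\beta\setminus L_\alpha$, so intersectingness of $\A$ confines $L_\alpha\cap L_\beta$ to $(M_\alpha\cap M_\beta)\setminus\{x_i,x_j\}$; a further, carefully chosen application of the special condition, combined with the containment $\{x_k:k\in\I(L)\}\subseteq L$ established above, should drive the required contradiction. This closing combinatorial step, bookkeeping how the various $\B_k$'s interact, is where the bulk of the work lies.
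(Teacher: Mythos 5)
Your part~(i) is correct and is essentially the paper's argument: the same double count of triples $(L,M,i)$, the same use of the per-block star size $s=\ell t/m$ and of the intersecting lift to $\M$ (the paper packages the counts as an incidence bipartite graph plus a separate counting lemma, but the content is identical).

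Part~(ii), however, has a genuine gap, and you say so yourself: after equality forces each $\B_i$ to be a maximum intersecting subfamily of $\M$ and hence a star $\M_{x_i}$, you never prove that the centers $x_i$ can be taken to coincide, and the sketched route (iterating the special condition on pairs $M_\alpha\in\M_{x_i}$, $M_\beta\in\M_{x_j}$ to force a contradiction) is left unfinished precisely at the decisive step. The difficulty is self-inflicted: you split the relations into separate families $\B_i$, whereas the cross-relation version of your own intersecting argument already removes the problem. Concretely, if $M\in\B_i$ and $M'\in\B_j$ with witnesses $L\in\A_{M,i}$, $L'\in\A_{M',j}$, then regularity gives $L\subseteq M$, $L'\subseteq M'$, and intersectingness of $\A$ gives $\varnothing\neq L\cap L'\subseteq M\cap M'$; hence the single pooled family $\M'=\bigcup_{i\in\I}\B_i$ is intersecting. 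In the equality case $\M'$ contains a maximum star $\B_1$, so $|\M'|$ cannot exceed the maximum and $\M'=\B_1=\M_a$ for one element $a$ (equivalently, one may define $\M'$ this way from the start, which is exactly what the paper does: it shows $\M'$ is a maximum intersecting subfamily of the strong EKR family $\M$, hence a single star). With a single center $a$ in hand, your own special-condition argument (pick $L_1\in\A$ with $a\notin L_1$, a related $M_1\in\M_a$, then $M_2$ with $M_1\cap M_2=\{a\}$, a witness $L_2\subseteq M_2$ in $\A$, and contradict $\varnothing\neq L_1\cap L_2\subseteq\{a\}$) finishes the proof as in the paper, using also that every $L\in\A$ is related to some $M$ because $d>0$. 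So the statement you are missing is easy to supply, but as written the proof of part~(ii) is incomplete.
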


  The proof of Lemma~\ref{lem:composition} is deferred to Section \ref{sec:Defer} in order to avoid interrupting the continuity of discussion, where we first
 concentrate on applications that highlight usefulness of the composition lemma.
 
Before we proceed to some new results, let us provide a proof of the classical Erd{\H o}s-Ko-Rado Theorem using the composition setup that essentially shows that the composition framework already covers  the elegant argument of Katona, known as Katona's cycle method.

\begin{corollary}[Erd{\H o}s-Ko-Rado Theorem \cite{EKR61}]
    Let $n\geq 2k$ be two positive integer numbers. Then, the family of all $k$-subsets of an n-element set $X$ is an EKR family.
\end{corollary}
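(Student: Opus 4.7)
The plan is to cast Katona's cycle method directly into the composition framework. I would set $\mathcal{L} = \binom{X}{k}$ and take the trivial outer family $\mathcal{M} = \{X\}$ (so $m = n$), letting the index set $\mathcal{I}$ be the collection of all cyclic orderings $\pi$ of the $n$ elements of $X$. For each $\pi \in \mathcal{I}$, I would declare $L \sim_{\pi} X$ to hold precisely when $L$ is a consecutive block of $k$ elements (a $k$-arc) of $\pi$. Since every $L \in \mathcal{L}$ is a subset of $X$, each relation $\sim_{\pi}$ is automatically regular, and under this setup $\mathcal{L}^{(\pi)}_{X}$ equals the set of the $n$ many $k$-arcs of $\pi$, while $\mathcal{M}^{(\pi)}_{L}$ is $\{X\}$ or $\varnothing$ according to whether $L$ is an arc of $\pi$ or not.

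The bulk of the plan is then to verify the four conditions of Definition~\ref{def:EKRchain} for the triple $(\mathcal{L},\mathcal{M},\sim_{\mathcal{I}})$. Condition (i) is immediate because $\mathcal{M} = \{X\}$ is trivially a (strong) EKR family. Condition (iii) is equally direct, as $|\mathcal{L}^{(\pi)}_{X}| = n > 0$ for every $\pi$. For condition (iv), I would invoke the transitive action of the symmetric group $S_n$ on both $\binom{X}{k}$ and on the set of cyclic orderings: by this symmetry, the number of cyclic orderings in which a given $k$-subset appears as an arc does not depend on which $k$-subset is chosen, so $\sum_{\pi \in \mathcal{I}} |\mathcal{M}^{(\pi)}_{L}|$ is constant over $L \in \mathcal{L}$.

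The only substantive step, and the one I expect to be the main obstacle, is condition (ii): for every cyclic ordering $\pi$, the family $\mathcal{L}^{(\pi)}_{X}$ of its $k$-arcs has to be EKR over the ground set $X$. This amounts exactly to the classical arc lemma underlying Katona's method, namely that when $n \geq 2k$, any pairwise intersecting family of $k$-arcs in a cyclic ordering of length $n$ has size at most $k$, a bound attained by the $k$ arcs through any fixed point. This is the unique place where the hypothesis $n \geq 2k$ is used, and it is the sole combinatorial input of the argument. Once this lemma is in hand, Lemma~\ref{lem:composition}(i) applied to $(\mathcal{L},\mathcal{M},\sim_{\mathcal{I}})$ immediately yields that $\binom{X}{k}$ is an EKR family, so that the double counting over cyclic orderings driving Katona's proof becomes entirely encapsulated by a single invocation of the composition lemma, with the arc lemma playing the role of its local, base-case ingredient.
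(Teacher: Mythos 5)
Your proposal is correct and matches the paper's own argument essentially step for step: same choice of $\mathcal{L}=\binom{X}{k}$, $\mathcal{M}=\{X\}$, relations indexed by (cyclic) orderings with $L\sim_\pi X$ meaning $L$ is a $k$-interval of $\pi$, the same verification of the four EKR-chain conditions (with Katona's arc lemma as the sole combinatorial input for condition (ii)), and the same final appeal to Part (i) of the composition lemma. The only cosmetic differences are that the paper indexes by permutations in $S_n$ and computes $\sum_\pi|\mathcal{M}^{(\pi)}_L|=n\cdot k!\,(n-k)!$ explicitly, where you invoke symmetry instead; both are fine.
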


\begin{proof}
Let $X=\{0,1,\ldots,n-1\}$, $\LL={X\choose k}$, $\M =\{X\}$, and $\I = {S}_n$, where ${S}_n$  denotes the symmetric group of all permutations on $X$. 
For  $\pi \in {S}_n$ and an element $i\in X$, we define an  interval  $A_{\pi,i}$ by
  $$A_{\pi,i}\isdef\{\pi(i),\pi(i+1),\ldots,\pi(i+k-1)\},$$
where addition is modulo $n$.

For every element $\pi \in {S}_n$, define the relation $\sim_{\pi}$ as follows.  A $k$-subset $L$ of $X$  satisfies $L\sim_{\pi} X$ if  for some $i\in X$
 we have $L=A_{\pi,i}$.
 
 We show that the triple $(\LL,\M,\sim_{{S}_n})$ is an EKR chain. To see this, first note that for every fixed $\pi \in {S}_n$
 and $M=X$,  the family $\LL^{^{(\pi)}}_{_M}=\{L|\, L\sim_{\pi} M\}=\{A_{\pi,i}|\, i\in X \} $ is an EKR family. 
 This follows since,  when $n\geq 2k$, the maximum cardinality of an intersecting family among all
  $k$-subsets of $X$ that are an interval with respect to $\pi$ is equal to $k$, i.e., the number of  intervals  with respect to
$\pi$ that contain a fixed element of $X$.  
 Finally, the family $\M$ is an EKR family  since it consists of a single set, and obviously satisfies the EKR property. 
  Clearly,  for any two elements $\{\pi, \sigma \} \subseteq {S}_n$, we have $|\LL^{^{(\pi)}}_{_M}|=|\LL^{^{(\sigma)}}_{_M}|=n$.
 Also, for any $k$-subset $L$ and any $\pi \in {S}_n$, if there exists an $i \in X$ such that $L = A_{\pi,i}$, then $|\mathcal{M}^{^{(\pi)}}_L| = 1$; otherwise, 
 $|\mathcal{M}^{^{(\pi)}}_L| = 0$. Therefore, 
 we have $\sum\limits_{\pi\in {{S}_n}}|\mathcal{M}^{^{(\pi)}}_{_L}|= n\times k!\times(n-k)!$.
Now by  Part~(i) of Lemma \ref{lem:composition}, it follows that the family $\binom{X}{k}$ is an EKR family.
\end{proof}
 In what follows, we will also
present several more examples that are not merely translations of known methods into our composition framework.

\subsection{$G$-balanced Lemma} 
\hfill
\vspace{0.2cm}

Let $X$ be a finite set of size $n$, and let $G$ be a finite group that acts transitively on $X$, 
     i.e. for every $x,x'\in X$, there exists $g\in G$ such that $x'=gx$.
The action of $G$ on $X$ naturally extends to an action of $G$ on $\binom{X}{k}$ as follows. 
For every $ A\in \binom{X}{k}$ and $g\in G$, define $gA\isdef\{ga: a\in A\}$. 
\begin{definition}[$(G,j)$-\text{balanced family}]\label{balance}
Let $\mathcal{F}$ be a subfamily of $\binom{X}{k}$, the group $G$ be a finite group acting transitively on $X$, and $j$ be a positive integer. We say that $\mathcal{F}$ is $(G,j)$-\textit{balanced} if the following conditions are satisfied,
\begin{itemize}
    \item[i)] The group $G$ acts transitively on $\mathcal{F}$.
    \item[ii)] There exist  elements $D_1,D_2,\ldots,D_r$ in $\mathcal{F}$ such that every element of $X$ belongs to precisely $j$ of the sets $D_i$. 
    \item[iii)] No more than $j$ of the  sets $D_i$  form an intersecting family.
\end{itemize}

\end{definition}

Observe that in the above definition, when $j=1$, the second condition implies the third one.

The following lemma is a consequence of Lemma~\ref{lem:composition} and is useful in proving that certain families of objects are EKR families. 

\begin{lemma}[$G$-balanced Lemma]\label{lem:gbalanceisEKR}
If $G$ acts transitively on a set $X$  and $\mathcal{F}\subseteq \binom{X}{k}$ is $(G,j)$-balanced, then $\mathcal{F}$ is an EKR family.
\end{lemma}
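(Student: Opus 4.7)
The plan is to derive the result from part~(i) of Lemma~\ref{lem:composition} by taking $\mathcal{L} = \mathcal{F}$, $\mathcal{M} = \{X\}$, and $\mathcal{I} = G$, so that $\ell = k \le m = n$. For each $g \in G$ I would define the regular relation $\sim_g$ from $\mathcal{L}$ to $\mathcal{M}$ by declaring $L \sim_g X$ precisely when $L \in \{gD_1, gD_2, \ldots, gD_r\}$, where $D_1, \ldots, D_r$ are the witnesses supplied by Definition~\ref{balance}. Regularity is automatic since $M = X$. With this setup $\mathcal{L}^{(g)}_X = \{gD_1, \ldots, gD_r\}$ (a family of $r$ pairwise distinct sets, since any repetition would yield an intersecting subfamily of size $j+1$, violating condition~(iii) of Definition~\ref{balance}), and $|\mathcal{M}^{(g)}_L|$ equals $1$ or $0$ according to whether $L \in g\{D_1, \ldots, D_r\}$.

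The proof then reduces to verifying the four clauses of Definition~\ref{def:EKRchain}. Clause~(i) is immediate since $\mathcal{M}$ is a singleton, and clause~(iii) holds with common value $|\mathcal{L}^{(g)}_X| = r > 0$. Clause~(ii) --- that each $\mathcal{L}^{(g)}_X$ is an EKR family --- is the conceptual core: because $g$ acts on subsets of $X$ by a bijection preserving both intersections and point--set incidences, the properties of $\{D_1, \ldots, D_r\}$ from Definition~\ref{balance} transfer verbatim to $\{gD_1, \ldots, gD_r\}$. In particular, for every $x \in X$ the star at $x$ inside $\mathcal{L}^{(g)}_X$ equals $\{gD_i : g^{-1}x \in D_i\}$ and so has size exactly $j$ by condition~(ii) of Definition~\ref{balance}, while condition~(iii) forbids any intersecting subfamily of $\mathcal{L}^{(g)}_X$ from exceeding size $j$. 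Hence each star is a maximum intersecting subfamily, as required for the EKR property.

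The only step needing a genuine computation is clause~(iv): for every $L \in \mathcal{F}$,
\[
\sum_{g \in G} |\mathcal{M}^{(g)}_L| \;=\; \sum_{i=1}^{r} \bigl|\{g \in G : gD_i = L\}\bigr|,
\]
where the split is justified by the pairwise distinctness of the $D_i$ (which forces the sets $\{g : gD_i = L\}$ to be disjoint in $i$). By transitivity of $G$ on $\mathcal{F}$, each such inner set is a coset of $\mathrm{Stab}_G(D_i)$, hence has size $|G|/|\mathcal{F}|$, so the total equals $r\,|G|/|\mathcal{F}|$, independent of $L$. Putting everything together, Lemma~\ref{lem:composition}(i) then yields that $\mathcal{F}$ is an EKR family. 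I expect clause~(iv) --- the only place where the transitivity of $G$ on $\mathcal{F}$ enters quantitatively --- to require the most care to phrase cleanly, but no serious obstacle beyond a careful orbit--stabilizer count is anticipated.
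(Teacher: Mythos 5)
Your proposal is correct and follows essentially the same route as the paper: both reduce the statement to Part~(i) of the composition lemma with $\mathcal{L}=\mathcal{F}$, $\mathcal{M}=\{X\}$, and relations indexed by the group elements sending members of $\mathcal{F}$ onto the decomposition sets $D_1,\ldots,D_r$, your convention $L\sim_g X \iff L\in g\{D_1,\ldots,D_r\}$ being the paper's relation with $g$ replaced by $g^{-1}$. The only cosmetic differences are that you verify clause~(iv) by an orbit--stabilizer count giving the explicit value $r\,|G|/|\mathcal{F}|$ where the paper uses a translation bijection, and your parenthetical claim that a repeated $D_i$ would violate condition~(iii) is not really justified --- but it is also unnecessary, since the relevant quantities are independent of $g$ and of $L$ in any case.
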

 To maintain the continuity of the discussion, the proof of Lemma~\ref{lem:gbalanceisEKR} is deferred to Section \ref{sec:Defer}, while the following corollary  presents the Frankl and Deza theorem for permutations.

\begin{corollary}{\rm \cite{DeFr77}}
    Let $ n$ be a positive integer number and let $S_n$  denote the symmetric group of all permutations on $[n]$.  Then, $S_n$ is an EKR family; that is,
$S_{i,j}\isdef\{\sigma\in S_n|\, \sigma(i)=j\}$ is a maximum intersecting family of permutations for every $i, j\in [n]$.
\end{corollary}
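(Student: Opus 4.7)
The plan is to apply the $G$-balanced Lemma (Lemma 2) after representing each permutation $\sigma \in S_n$ by its graph $\{(i,\sigma(i)) : i\in[n]\}$, so that $S_n$ becomes a family of $n$-subsets of the ground set $X = [n]\times[n]$. Under this identification two permutations agree on some coordinate if and only if their corresponding sets share an element, and the subfamily $S_{i,j}$ is precisely $\mathcal{F}_{(i,j)}$ in the notation of Definition~1. Hence proving that $S_n$, viewed in this way, is an EKR family is equivalent to the statement of the corollary.

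For the group, I would take $G = S_n \times S_n$ acting on $X$ by $(\alpha,\beta)\cdot(i,j) = (\alpha(i),\beta(j))$; this action is obviously transitive on $X$. The induced action on graphs sends the graph of $\sigma$ to the graph of $\beta\sigma\alpha^{-1}$, and given any $\sigma,\tau \in S_n$ the choice $\alpha = \sigma$, $\beta = \tau$ shows that $G$ acts transitively on $S_n$, which verifies condition (i) of $(G,j)$-balance.

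Next I would set $j = 1$ and take $D_0,D_1,\ldots,D_{n-1}$ to be the graphs of the $n$ cyclic shifts $\sigma_k(i) = i+k \pmod{n}$. A point $(i,j) \in X$ belongs to $D_k$ if and only if $k \equiv j-i \pmod{n}$, so every element of $X$ lies in exactly one of the $D_k$; this is condition (ii). As remarked in the paper, when $j=1$ condition (iii) is automatic from (ii) — and in fact the $D_k$ are here pairwise disjoint as subsets of $X$, so no two of them intersect at all.

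Thus $S_n$ is $(G,1)$-balanced, and Lemma~2 immediately gives that $S_n$ is an EKR family, so any maximum intersecting subfamily has size $|S_{i,j}| = (n-1)!$ for every $i,j \in [n]$. There is no real obstacle in this argument; the only thing that needs care is translating between the permutation-theoretic notion of intersection (agreeing at some coordinate) and the set-theoretic notion provided by the graph representation, after which the $(G,1)$-balance reduces to the evident observation that the cyclic shifts partition $[n]\times[n]$ into $n$ diagonals.
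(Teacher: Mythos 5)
Your proof is correct and follows essentially the same route as the paper: identify permutations with perfect matchings of $K_{n,n}$ (equivalently, their graphs in $[n]\times[n]$), act by $S_n\times S_n$, decompose the ground set into $n$ disjoint permutation graphs, and invoke the $G$-balanced lemma with $j=1$. Your version merely makes the decomposition explicit via the cyclic shifts, which the paper leaves as the standard decomposition of $E(K_{n,n})$ into perfect matchings.
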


\begin{proof}
Note that a perfect matching in the complete bipartite graph $K_{n,n}$ corresponds to a unique permutation on $[n]$. 
Consequently, every intersecting family of permutations can be identified with an intersecting family of perfect matchings in the complete bipartite graph $K_{n,n}$.

Let $\mathcal{F}$ denote the family of all perfect matchings in $K_{n,n}$. The edge set of $K_{n,n}$ can be decomposed into perfect matchings. Furthermore, the group $S_n\times S_n$ acts on the edge set of $K_{n,n}$ by $(\sigma,\sigma')(i,j)\isdef (\sigma(i),\sigma'(j))$ for all $\sigma,\sigma'\in S_n$ and $i,j\in [n]$. This action and its extension to the set of all perfect matchings of $K_{n,n}$ is trivially transitive.  Therefore, $\mathcal{F}$ forms an $(S_n\times S_n,1)$-balanced family. By Lemma~\ref{lem:gbalanceisEKR}, it follows that $\mathcal{F}$ satisfies the EKR property.
\end{proof}

 For some more applications of G-balanced lemma see Theorem~\ref{prop:borg} and Proposition~\ref{prop:Hamiltonian}.

\section{Applications}

In this section, we present some   applications of the composition  and G-balanced lemmas developed in the previous section. Each subsection of this part is devoted to one such application.

\subsection{Generalized Katona Cycle Method}\label{sec:Katona}
\hfill
\vspace{0.2cm}

Borg and Meagher in \cite{BoMe1} presented an elegant framework  to derive  EKR-type results for set systems under certain symmetry assumptions. Their result is a natural generalization of  celebrated Katona's cycle method for proving the original Erd{\H o}s-Ko-Rado Theorem. The following result is in the heart of their framework, but in a slightly different notation.

\begin{alphtheorem}[\cite{BoMe1}, Theorem 7]\label{prop:borg}
Assume that there is a family $\mathcal{V}$ of $k$-subsets of a set $X$ of size $n$, such that $n\geq 2k$. Assume that the following conditions hold.
\begin{itemize}
    \item[1)] There is a group $G$ that acts transitively on $X$ and through this action, also acts transitively on $\mathcal{V}$.
    \item[2)] There is an ordering of all the elements of $X$ around a circle such that any $k$ consecutive elements of the ordering form an element of the family $\mathcal{V}$.
\end{itemize}

Then, the family $\mathcal{V}$ has the EKR property.
\end{alphtheorem}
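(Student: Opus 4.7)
The plan is to recognize this theorem as a direct instance of the $G$-balanced Lemma (Lemma~\ref{lem:gbalanceisEKR}), applied with parameter $j=k$. That is, I would verify that $\mathcal{V}$ is $(G,k)$-balanced in the sense of Definition~\ref{balance}, after which the conclusion is immediate.

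To produce the witness sets $D_1,\ldots,D_r$ required by condition~(ii) of Definition~\ref{balance}, I use the cyclic ordering supplied by hypothesis~(2). Labeling the elements of $X$ as $x_0,x_1,\ldots,x_{n-1}$ in the given cyclic order, I set
\[
D_i\isdef\{x_i,x_{i+1},\ldots,x_{i+k-1}\}\qquad (i=0,1,\ldots,n-1),
\]
with indices taken modulo $n$. By hypothesis~(2) each $D_i$ belongs to $\mathcal{V}$, and an element $x_j\in X$ lies in exactly the $k$ intervals $D_{j-k+1},\ldots,D_j$, so the family $\{D_0,\ldots,D_{n-1}\}$ covers every point of $X$ precisely $k$ times. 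Condition~(i) of Definition~\ref{balance}, the transitive action of $G$ on $\mathcal{V}$, is built into hypothesis~(1).

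The actual work is in verifying condition~(iii): that no intersecting subfamily of $\{D_0,\ldots,D_{n-1}\}$ has size exceeding $k$. This is the classical Katona cycle bound, and the standard pairing argument suffices. If $D_a$ lies in an intersecting subfamily $\mathcal{D}'$, then any other member $D_b\in\mathcal{D}'$ has index $b$ in one of the $2k-1$ cyclic positions $a-k+1,\ldots,a+k-1$ (mod $n$). For each $s\in\{1,\ldots,k-1\}$ the two intervals $D_{a-s}$ and $D_{a+k-s}$ consist of consecutive arcs of length $k$ whose union has length $2k$, and so they are disjoint because $n\geq 2k$. Thus at most one element of each such pair can appear in $\mathcal{D}'$, which together with $D_a$ itself gives $|\mathcal{D}'|\leq 1+(k-1)=k$.

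With all three conditions of Definition~\ref{balance} in place, Lemma~\ref{lem:gbalanceisEKR} applies and yields that $\mathcal{V}$ is an EKR family. The only step requiring genuine argument is the Katona-style disjointness pairing of the previous paragraph, and this is the main obstacle; everything else is a transparent reformulation of the hypotheses into the language of $(G,k)$-balanced families, which is precisely the setting the $G$-balanced Lemma was designed to handle.
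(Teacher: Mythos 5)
Your proposal is correct and follows essentially the same route as the paper: both reduce the theorem to the $G$-balanced Lemma by taking the $D_i$ to be the $n$ consecutive $k$-intervals of the cyclic ordering, so that every point is covered exactly $k$ times and at most $k$ of the $D_i$ can pairwise intersect. The only difference is that you spell out the Katona-style pairing argument for the bound $|\mathcal{D}'|\leq k$, which the paper states without detail.
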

In what follows, we present a short proof of Theorem~\ref{prop:borg}  using  Lemma \ref{lem:gbalanceisEKR}. 

\begin{proof}[Proof of  Theorem~\ref{prop:borg}] 
We show that the assumptions of  Theorem~\ref{prop:borg} imply those of Lemma \ref{lem:gbalanceisEKR}. 
The first condition of Definition~ \ref{balance} regarding the existence of the group $G$ is precisely the first assumption of Theorem~\ref{prop:borg}. The 
second and third conditions of Definition~ \ref{balance}
  are satisfied by Condition~(2) of  Theorem~\ref{prop:borg}, simply by taking the sets $D_i$ to be all the consecutive $k$-elements in the ordering specified by Condition~(2).
It follows that each element  of $X$ is contained in exactly $k$ of the sets $D_i$ and  at most $k$ of the  sets $D_i$  form an intersecting family.
Therefore,   Lemma \ref{lem:gbalanceisEKR} guarantees that the family $\mathcal{V}$ has the EKR property.
\end{proof}

An ordering of the elements of $X$ which satisfies Condition $2$ in  Theorem~\ref{prop:borg} is called an \textit{admissible ordering}.
 In   \cite{BoMe1},  Borg and Meagher   showed that if $X$ is the set of all edges of 
 the complete graph $K_{n}$, and $\mathcal{V}$ is the set of all $k$-matchings of $K_{n}$ where
  $k< \lfloor{n\over 2}\rfloor$, then there exists an admissible ordering of $X$. In other words, there exists an ordering of all the edges of $K_{n}$ around a circle such that any $k$ 
  consecutive edges on the circle form a $ k$-matching. Then, as an interesting consequence of Theorem~\ref{prop:borg}, they concluded that for $k< \lfloor\tfrac{n}{2}\rfloor$, 
  the family of all $k$-matchings in $K_{n}$ has the EKR property. This result is an extension of an earlier result of  Kamat and  Misra for even $n$ in~\cite{Kamat}.

Although Theorem~\ref{prop:borg} is a powerful tool to prove EKR-type results, it has its limitations. 
Most notably, the existence of admissible orderings is a strong assumption that  restricts its applicability. For instance, when $n\geq 4$, if we consider the subsets of the edges of $K_n$ that form a cycle $C_k$ (instead of $k$-matchings), then there is no such admissible ordering. 
Equivalently, there is no way to order all the edges of $K_n$ around a circle such that any $k$ consecutive edges form a $k$-cycle. 
Therefore, their method does not resolve the question of 
whether the family of all $k$-cycles  of $K_n$ has the EKR property. 
In the next subsection, we show that the composition lemma can handle  this question.

\subsection{The EKR Property for Cycles and Matchings}\label{subsec:matchcycle}
\hfill
\vspace{0.2cm}

As the next application of the composition lemma, we prove that the  family of all $k$-cycles  of $K_n$ and $K_{n,n}$, for sufficiently large $n$, has the EKR property.  
Also, we show that the family of all $k$-matchings of $K_n$ and $K_{n,n}$, posses  the EKR property.
 	The first step of the proof is the following proposition, which we prove using Lemma \ref{lem:gbalanceisEKR}.

\begin{proposition} \label{prop:Hamiltonian}~
\begin{itemize} 
\item[(i)] Let $n\geq 5$  be an integer and $\F_n(C_n)$ be the family of  all Hamiltonian cycles in the complete graph $K_n$.
Then, the family $\F_n(C_n)$ is an EKR family.
\item[(ii)] Let $n\geq 4$  be an integer and  let $\mathcal{B}_n(C_{2n})$ be the family of all Hamiltonian cycles in the complete bipartite graph $K_{n,n}$.
Then the family $\mathcal{B}_n(C_{2n})$ is an EKR family.
\end{itemize} 
\end{proposition}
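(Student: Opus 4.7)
The plan is to derive both parts from the $G$-balanced Lemma (Lemma~\ref{lem:gbalanceisEKR}) by constructing, in each case, a transitive group action on the edge set of the ambient graph together with a $(G,j)$-balanced collection of Hamiltonian cycles for an appropriate $j$.

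For part~(i), identify each Hamiltonian cycle of $K_n$ with its $n$-edge set, so that $\mathcal{F}_n(C_n)\subseteq \binom{E(K_n)}{n}$, and let $G=S_n$ act on $V(K_n)$ and hence on $E(K_n)$. This action is transitive on edges and transitively permutes $\mathcal{F}_n(C_n)$, which establishes Condition~(i) of Definition~\ref{balance}. For odd $n$, Walecki's classical theorem gives an edge decomposition of $K_n$ into $(n-1)/2$ pairwise edge-disjoint Hamiltonian cycles $D_1,\dots,D_{(n-1)/2}$; every edge of $K_n$ lies in exactly one $D_i$, and pairwise edge-disjointness instantly forces that no two of them form an intersecting family, so $\mathcal{F}_n(C_n)$ is $(G,1)$-balanced. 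For even $n$, I start from the standard decomposition of $K_n\setminus M$ (with $M$ a perfect matching of $K_n$) into $(n-2)/2$ Hamiltonian cycles, and augment it with Hamiltonian cycles that cover the edges of $M$ a balanced number of times; the augmentation is chosen to be $G$-invariant and so that no $j{+}1$ of the selected cycles are pairwise edge-intersecting.

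For part~(ii), the ambient graph is $K_{n,n}$, the ground set is $X=E(K_{n,n})$, and the group $G=S_n\times S_n$ (permuting each part independently) acts transitively on $E(K_{n,n})$ and on $\mathcal{B}_n(C_{2n})$. For even $n$, $K_{n,n}$ admits a classical Hamiltonian decomposition into $n/2$ edge-disjoint Hamiltonian cycles, immediately yielding a $(G,1)$-balanced family and the conclusion. For odd $n$, degree parity rules out a full Hamiltonian decomposition, so I would instead use a near-decomposition of $K_{n,n}$ into $(n-1)/2$ Hamiltonian cycles plus a perfect matching, completed by a $G$-symmetric selection of Hamiltonian cycles that cover the residual matching, to produce a $(G,j)$-balanced collection.

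The main obstacle is verifying Condition~(iii) of Definition~\ref{balance} in the parity-bad cases (even $n$ in part~(i) and odd $n$ in part~(ii)). In the parity-good regime, Condition~(iii) is automatic from edge-disjointness, and the proof essentially reduces to invoking the relevant Walecki-type decomposition theorem. In the parity-bad cases, one must design the augmenting Hamiltonian cycles so that the pairwise-intersection pattern of the selected family contains no $(j{+}1)$-clique; this is a finite combinatorial construction whose validity at the thresholds $n\geq 5$ (respectively $n\geq 4$) stated in the proposition can be checked directly.
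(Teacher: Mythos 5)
Your overall strategy (invoke the $G$-balanced Lemma~\ref{lem:gbalanceisEKR} with $G=S_n$ acting on $E(K_n)$, resp.\ $G=S_n\times S_n$ acting on $E(K_{n,n})$) is exactly the paper's, and your parity-good cases are complete: odd $n$ in (i) via Walecki gives a $(G,1)$-balanced family, and even $n$ in (ii) via a Hamiltonian decomposition of $K_{n,n}$ gives the same. The genuine gap is in the parity-bad cases, which you only sketch: you propose taking the near-decomposition (Hamiltonian cycles plus a leftover perfect matching $M$) and ``augmenting'' it with further Hamiltonian cycles covering $M$. This does not work as described. Condition~(ii) of Definition~\ref{balance} demands that \emph{every} edge of the ambient graph lie in precisely $j$ of the chosen cycles; any Hamiltonian cycle you add to cover edges of $M$ also covers $n-|M\cap C|$ edges outside $M$, so the non-$M$ edges immediately become unevenly covered, and there is no obvious way to rebalance while also keeping condition~(iii) (no $j+1$ of the cycles pairwise intersecting). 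Moreover, this is not ``a finite combinatorial construction'' checkable at the thresholds: it must be carried out for every even $n\geq 6$ in (i) and every odd $n\geq 5$ in (ii). Finally, your requirement that the augmentation be ``$G$-invariant'' is off target: Definition~\ref{balance} does not ask the collection $D_1,\dots,D_r$ to be invariant (a $G$-invariant collection would be all Hamiltonian cycles, which violates both (ii) and (iii)); only transitivity of $G$ on $\mathcal{F}$ and the covering/intersection conditions for one specific collection are needed.

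The missing idea, which is the paper's actual construction, is to abandon the near-decomposition and instead build the cycles as unions of \emph{consecutive} perfect matchings in a $1$-factorization. For even $n$ in (i), take the circle-method factorization $N_1,\dots,N_{n-1}$ of $K_n$ and set $C_i:=N_i\cup N_{i+1}$ (indices mod $n-1$); for all $n$ in (ii), take $N_i=\{u_jv_{j+i}\}$ in $K_{n,n}$ and set $C_i:=N_i\cup N_{i+1}$ (indices mod $n$). Each union of two consecutive matchings is a single Hamiltonian cycle, every edge lies in exactly two of the $C_i$, and $C_i\cap C_j\neq\varnothing$ only for $j=i\pm 1$, so no three of the $C_i$ are pairwise intersecting. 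Hence the family is $(G,2)$-balanced and Lemma~\ref{lem:gbalanceisEKR} applies; note this also treats part~(ii) uniformly for all $n\geq 4$, making your separate even/odd split there unnecessary.
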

\begin{proof}
	
To prove (i),
first, let  $n\geq 5$ be an odd integer. By the Walecki construction (e.g.  see \cite{alspach, Bryant2007}), $K_n$ has an edge decomposition to Hamiltonian cycles, say 
$\{C_1,C_2,\ldots,C_{n-1\over 2}\}$. 
For even $n$, it is well-known that by using the circle method, $K_n$ can be decomposed into perfect matchings $\{N_1,N_2,\ldots, N_{n-1}\}$
(e.g. see Section~7.1 of \cite{west}).
  Now we define $n-1$ Hamiltonian cycles as follows, $C_1\isdef N_1\cup N_2 ,C_2 \isdef N_2\cup N_3,\ldots,C_{n-1} \isdef N_{n-1}\cup N_1$.
   For this construction, each edge appears in exactly two $C_i$'s, and at most two of the $C_i$'s form an intersecting family.

  The symmetric group $S_n$ (the permutation group of the vertices of $K_n$) naturally acts on both $E(K_n)$ and $\F_n(C_n)$.
Therefore, when $n$ is odd, $\F_n(C_n)$ is $(S_n,1)$-balanced, and when $n$ is even, $\F_n(C_n)$ is $(S_n,2)$-balanced. Consequently, 
 by Lemma~\ref{lem:gbalanceisEKR},  $\F_n(C_n)$ is an EKR family.

To prove (ii),
let $n\geq 4$ be an integer. Consider the complete bipartite graph $K_{n,n}$ with parts $A=\{u_0,\ldots,u_{n-1}\}$  and $B=\{v_0,\ldots,v_{n-1}\}$. We consider
the following decomposition of the edges of $K_{n,n}$ into perfect matchings.
For each $0\leq i\leq n-1$, let $N_i$ be a perfect matching in $K_{n,n}$ with the edge set $E(N_i)=\{u_{j} v_{j+i}|\, 0\leq j\leq n-1\}$ where addition is modulo $n$.
  Now we define $n$ Hamiltonian cycles as follows, $C_0\isdef N_0\cup N_1 ,C_1 \isdef N_1\cup N_2,\ldots,C_{n-1} \isdef N_{n-1}\cup N_0$.
   For this construction, each edge appears in exactly  two of the sets $C_i$, and at most  two of the  sets $C_i$ intersect.

Let $S_A$ and $S_B$ be the permutation groups of vertices of $A$ and $B$, respectively.
The symmetric group $S_A\times S_B$  naturally acts on both $E(K_{n,n})$ and $\mathcal{B}_n(C_{2n})$.
Therefore,  $\mathcal{B}_n(C_{2n})$ is $(S_A\times S_B,2)$-balanced. Consequently, by Lemma~\ref{lem:gbalanceisEKR}, $\mathcal{B}_n(C_{2n})$ is an EKR family.

 \end{proof}


\noindent We say that a $k$-subset of $[n]$ is   {\it separated}   if it does not contain any pair of consecutive elements $ i, i+1$ or the pair $n,1$.
Let  the family of all separated sets in ${[n]\choose k}$ be denoted by  ${[n]\choose k}_2$. 
 Holroyd and Johnson posed a conjecture in~\cite{HoJo, Holroyd1999} that  an analogue of the Erd{\H o}s– Ko–Rado theorem holds for intersecting families of separated $k$-sets, i.e.
 ${[n]\choose k}_2$ is an EKR family.
 Their conjecture was settled by Talbot in~\cite{Talbot}. For any $i\in [n]$, let $\Sa_i^*\isdef\{A|\, A\in  {n\choose k}_2 \text{ and }i\in A\}$.
\begin{alphtheorem}{\rm\cite{Talbot}}\label{thm:Talbot}
Let $n\geq 2k$ and $\F\subset {[n]\choose k}_2$ be an intersecting family. Then, 
$|\F|\leq |\Sa_1^*|$.
 Moreover, for $n\neq 2k+2$ the only maximum intersecting subfamilies are $S_i^*$ for $i\in [n]$. If $n=2k+2$, then other maximum intersecting subfamilies exist.
\end{alphtheorem}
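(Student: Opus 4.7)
The plan is to apply the Composition Lemma in the spirit of the Katona-cycle proof of the classical Erd{\H o}s-Ko-Rado theorem given earlier. Assuming $n>2k$ (the boundary case $n=2k$ being trivial, since there are only two separated $k$-sets in $[n]$ and they are disjoint), set $\mathcal{L}=\binom{[n]}{k}_2$ and $\mathcal{M}=\{[n]\}$, and take the index set $\mathcal{I}$ to be a family of \emph{admissible cyclic orderings} of $[n]$: bijections $\pi:\mathbb{Z}_n\to[n]$ such that every cyclic $k$-window $\{\pi(i),\pi(i+1),\ldots,\pi(i+k-1)\}$ forms a separated $k$-subset of $[n]$. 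For each admissible $\pi$, declare $L\sim_\pi[n]$ iff $L$ equals one of these cyclic $k$-windows; then $\mathcal{L}^{(\pi)}_{[n]}$ consists of the $n$ cyclic $k$-windows of $\pi$.

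Three of the four conditions of Definition~\ref{def:EKRchain} are then essentially immediate. Condition (i) holds because $\mathcal{M}$ is a singleton. Condition (ii) holds because, for any $\pi\in\mathcal{I}$, the $n$ cyclic $k$-windows of $\pi$ form an EKR family with ground set $[n]$ by the classical Katona argument: each element of $[n]$ lies in exactly $k$ windows, and any intersecting subcollection has its starting indices confined to a cyclic arc of length at most $k-1$, hence has size at most $k$. Condition (iii) is immediate since $|\mathcal{L}^{(\pi)}_{[n]}|=n$ for every admissible $\pi$.

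The main obstacle is condition (iv): every separated $k$-subset $L\in\mathcal{L}$ must appear as a cyclic window in exactly the same number of admissible orderings. This is nontrivial because $D_{2n}$ does not act transitively on $\binom{[n]}{k}_2$ in general — the orbits are indexed by the gap-multiset of the subset, and different orbits could in principle yield different coverage counts. I would address this by combining the $D_{2n}$-action on $\mathcal{I}$ with a double-counting argument that equalises coverage across gap-multiset orbits; in cases where uniformity fails for the naive choice $\mathcal{I}=\{\text{all admissible orderings}\}$, one can assign suitable multiplicities to the elements of $\mathcal{I}$ so that the weighted count becomes constant in $L$. A Walecki-type decomposition argument of the relevant auxiliary graph on $[n]$, in the spirit of Proposition~\ref{prop:Hamiltonian}, should produce a sufficient supply of admissible orderings with the required symmetry for every $n>2k$. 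Once condition (iv) is verified, Lemma~\ref{lem:composition}(i) delivers the EKR conclusion.

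For the strong EKR assertion when $n\ne 2k+2$, the singleton $\mathcal{M}=\{[n]\}$ cannot satisfy the separating condition (ii) of a special EKR chain (there is no distinct $M'\in\mathcal{M}$), so I expect to replace $\mathcal{M}$ by a refined outer family of strict subsets of $[n]$ that is itself strong EKR and possesses the required separating property, with admissible orderings refined relative to each such $M$; uniqueness of the extremal configuration then follows from Lemma~\ref{lem:composition}(ii). The excluded case $n=2k+2$ is precisely the regime in which additional non-star maximum intersecting subfamilies exist, so the framework is expected to fail for that value of $n$ at the separating condition, reflecting these extra maxima.
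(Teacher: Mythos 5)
You are not being compared against a proof in the paper here: Theorem~\ref{thm:Talbot} is imported verbatim from Talbot \cite{Talbot} (whose argument is a compression/induction proof), and the present paper only uses it as a black box. Judged on its own, your sketch is a program rather than a proof, and the load-bearing step is missing. With $\mathcal{M}=\{[n]\}$, condition (iv) of Definition~\ref{def:EKRchain} requires that every separated $k$-set occur as a cyclic $k$-window of exactly the same number of orderings in $\mathcal{I}$ (counted with multiplicity), and this is precisely the point where Katona-type arguments for $\binom{[n]}{k}_2$ are known to founder: the dihedral group is not transitive on separated sets, coverage is only constant on gap-multiset orbits, and equalising it amounts to showing the all-ones vector lies in the integral cone spanned by the window-indicator vectors of admissible orderings. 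You offer only ``double counting'', ``suitable multiplicities'' and a ``Walecki-type'' supply of orderings, none of which is constructed or shown to exist; even the existence of a single admissible ordering for every $n>2k$ is asserted rather than proved (it fails at $n=2k$, and already at $n=2k+1$ the separation constraints pin the ordering down to essentially one jump construction). Without (iv), Lemma~\ref{lem:composition}(i) yields nothing, so the EKR half of the theorem is not established. (A minor point: the Katona bound for intervals is not that the starting indices lie in an arc of length $k-1$; the correct argument pairs the $2(k-1)$ possible overlapping starts into disjoint pairs, though the bound $k$ you use is right.)

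The uniqueness half, which is the genuinely hard part of Talbot's theorem and includes the exceptional non-star maxima at $n=2k+2$, is deferred entirely to an unspecified ``refined outer family'' $\mathcal{M}$ of proper subsets of $[n]$. No candidate is proposed, and it is unclear one exists within this framework: ``separated'' is defined by the cyclic adjacency structure of $[n]$, so the trace of $\binom{[n]}{k}_2$ on a proper subset $M$ has no reason to be an EKR family over $M$, condition (iii) of an EKR chain forces all the inner families $\mathcal{L}^{(i)}_{M}$ to have equal positive size (yet another uniformity to engineer), and the separating condition (ii) of a special EKR chain must hold for $\mathcal{M}$ itself. Moreover, a correct proof must be sensitive to the single exceptional value $n=2k+2$; your sketch only predicts that the method ``is expected to fail'' there, which is not an argument. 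As it stands, both pillars of the theorem --- the uniform-coverage condition and the characterisation of the extremal families --- are gaps, and closing them would in effect require reproving Talbot's theorem by other means.
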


Let  $k $ be a positive integer. The $k$-matching graph, denoted by $T_k$, is the union of $k$ vertex-disjoint
edges,  i.e.  the union of $k$ copies of $K_2$.
Let $n\geq 2k$  and let $\C_n(T_k)$ be the family of all $k$-matchings in  the cycle $C_n$. From Talbot's theorem (Theorem~\ref{thm:Talbot}), $\C_n(T_k)$ is an EKR family. This observation leads to the following corollary. 
\begin{corollary}
Let $n$ and $k$ be positive integers with $n\geq 2k$. 
 Let $\F_n(T_k)$ denote the family of all $k$-matchings in $K_n$. Then,  $\F_n(T_k)$ is an EKR family.
  \end{corollary}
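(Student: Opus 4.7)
My plan is to apply the Composition Lemma (Lemma~\ref{lem:composition}(i)) with ground set $X = E(K_n)$, family $\LL = \F_n(T_k)$ of $k$-matchings of $K_n$, and family $\M = \F_n(C_n)$ of Hamiltonian cycles of $K_n$, viewed as $k$- and $n$-subsets of $X$ respectively. I would take the index set $\I = \{1\}$ and define a single regular relation $\sim_1$ by $L \sim_1 M$ iff $L \subseteq M$; that is, the matching $L$ is a submatching of the Hamiltonian cycle $M$.

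Assume first that $n \geq 5$ and $n \geq 2k$. I would verify the four conditions of Definition~\ref{def:EKRchain} in turn. Condition~(i), that $\M$ is an EKR family, is precisely Proposition~\ref{prop:Hamiltonian}(i). For condition~(ii), fix any Hamiltonian cycle $M$; labelling its $n$ edges cyclically by $[n]$ identifies $\LL^{(1)}_M$ with $\C_n(T_k)$, which is EKR by Talbot's theorem (Theorem~\ref{thm:Talbot}) as noted just before this corollary. For condition~(iii), that $|\LL^{(1)}_M|$ is the same positive constant for every $M \in \M$, all Hamiltonian cycles on $n$ labelled vertices are pairwise isomorphic as graphs and therefore contain equally many $k$-matchings; positivity is guaranteed by $n \geq 2k$. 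For condition~(iv), the natural $S_n$-action on the vertices of $K_n$ induces actions on both $\LL$ and $\M$, and it is transitive on $\LL$ because any two $k$-matchings of $K_n$ are isomorphic subgraphs with the same number ($n-2k$) of isolated vertices; a permutation sending $L$ to $L'$ induces a bijection between the Hamiltonian cycles containing $L$ and those containing $L'$, so $|\M^{(1)}_L|$ is independent of $L$.

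Applying Lemma~\ref{lem:composition}(i) then yields that $\F_n(T_k)$ is an EKR family for all $n \geq 5$ with $n \geq 2k$. The residual cases to verify directly are $k = 1$ (trivial, as every pair of distinct singletons is disjoint, so the maximum intersecting subfamily has size $1 = |(\F_n(T_1))_e|$) and $(n,k) = (4,2)$ (the three perfect matchings of $K_4$ are pairwise edge-disjoint, so the maximum intersecting subfamily again has size $1 = |(\F_4(T_2))_e|$). Since the substantive ingredients — Talbot's theorem on separated $k$-subsets and Proposition~\ref{prop:Hamiltonian} on Hamiltonian cycles — are already established, I do not expect any real obstacle; the only minor check is the cyclic edge-labelling of $M \cong C_n$ that identifies the $k$-matchings in $M$ with the separated $k$-subsets of $[n]$, which is immediate.
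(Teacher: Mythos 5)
Your proposal is correct and follows essentially the same route as the paper: the same EKR chain $\big(\F_n(T_k),\F_n(C_n),\{\subseteq\}\big)$, with Proposition~\ref{prop:Hamiltonian}(i) for the Hamiltonian cycles, Talbot's theorem (Theorem~\ref{thm:Talbot}) for the $k$-matchings inside a fixed cycle, and symmetry for the two counting conditions, followed by Part~(i) of Lemma~\ref{lem:composition}. Your explicit treatment of the small cases ($k=1$ and $(n,k)=(4,2)$) is in fact a bit more careful than the paper's, which dismisses $n\le 4$ more briskly even though Proposition~\ref{prop:Hamiltonian}(i) is stated only for $n\ge 5$.
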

\begin{proof}
For $n=2$ the statement is trivially true. Assume that $n\geq 3$. Let $\F_n(C_n)$ be the family of all  Hamiltonian cycles in the complete graph $K_n$.
We claim that $\big(\F_n(T_k),\F_n(C_n), \{\subseteq\}\big)$ is an EKR chain in which $\{\subseteq\}$ is the set of a single inclusion relation.
We verify the conditions  of Definition~\ref{def:EKRchain}, to prove this claim  by  Part~(i) of Lemma~\ref{lem:composition}.
The fact that $\F_n(C_n)$ is an EKR family has been proven in Part~(i) of Proposition~\ref{prop:Hamiltonian}. 
For every fixed Hamiltonian cycle $C\in \F_n(C_n)$, by applying Theorem~\ref{thm:Talbot}, 
we know that the family of all $k$-matchings  in $C$ is an EKR family. 
The fact that all Hamiltonian cycles in $K_n$ contain the same number of $k$-matchings and every $k$-matching is contained in the same number of Hamiltonian cycles in $K_n$
 is trivial due to the symmetry.
Therefore,  by Part~(i) of Lemma~\ref{lem:composition}, the family of all $k$-matchings in $K_n$  is an EKR family.
\end{proof}
\begin{corollary}
Let $n$ and $k$ be two positive integers with  $n\geq k$. 
 Let $\mathcal{B}_n(T_k)$ denote the family of all $k$-matchings in $K_{n,n}$. Then,  $\mathcal{B}_n(T_k)$ is an EKR family.   
 \end{corollary}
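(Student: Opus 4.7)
The plan is to mirror the proof of the preceding corollary, replacing the complete graph $K_n$ with the complete bipartite graph $K_{n,n}$ and replacing Hamiltonian cycles in $K_n$ with Hamiltonian cycles in $K_{n,n}$. Specifically, I would argue that the triple $\bigl(\mathcal{B}_n(T_k),\mathcal{B}_n(C_{2n}),\{\subseteq\}\bigr)$ is an EKR chain and then apply Part~(i) of Lemma~\ref{lem:composition}. Here $\{\subseteq\}$ denotes the single inclusion relation from $k$-matchings of $K_{n,n}$ to Hamiltonian cycles of $K_{n,n}$, and both families are regarded as families of subsets of the edge set of $K_{n,n}$.

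To verify the conditions of Definition~\ref{def:EKRchain}, I would first invoke Part~(ii) of Proposition~\ref{prop:Hamiltonian} to conclude that $\mathcal{B}_n(C_{2n})$ is an EKR family (for $n\geq 4$; the residual tiny cases $n\in\{1,2,3\}$ are either trivial or follow by direct inspection of $K_{n,n}$). Next, for any fixed Hamiltonian cycle $C\in \mathcal{B}_n(C_{2n})$, the family of all $k$-matchings sitting inside $C$ is exactly the family $\C_{2n}(T_k)$ of $k$-matchings in the cycle $C_{2n}$. Since $n\geq k$ gives $2n\geq 2k$, Talbot's Theorem~\ref{thm:Talbot} guarantees that this family is EKR, establishing condition~(ii) of Definition~\ref{def:EKRchain}. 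Condition~(iii) is immediate: the group $S_n\times S_n$ (permuting each part of $K_{n,n}$) acts transitively on $\mathcal{B}_n(C_{2n})$ and on $\mathcal{B}_n(T_k)$, so by symmetry every Hamiltonian cycle contains the same number of $k$-matchings, and every $k$-matching lies in the same number of Hamiltonian cycles. Since $\{\subseteq\}$ is a single regular relation, condition~(iv) reduces to this same uniform count.

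With all four conditions in place, Part~(i) of Lemma~\ref{lem:composition} yields that $\mathcal{B}_n(T_k)$ is an EKR family. The only genuinely non-routine step is condition~(ii), but since it amounts to quoting Talbot's theorem applied to the cycle $C_{2n}$, no real obstacle arises. Thus I expect the proof to be a short and essentially verbatim adaptation of the $K_n$-version just proved.
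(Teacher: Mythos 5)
Your proposal is correct and follows essentially the same route as the paper: the same EKR chain $\bigl(\mathcal{B}_n(T_k),\mathcal{B}_n(C_{2n}),\{\subseteq\}\bigr)$, with Proposition~\ref{prop:Hamiltonian}(ii) for the Hamiltonian cycles, Talbot's Theorem~\ref{thm:Talbot} for the matchings inside a fixed cycle, symmetry for the counting conditions, and Part~(i) of Lemma~\ref{lem:composition}. Your explicit treatment of the small cases $n\in\{1,2,3\}$ (where Proposition~\ref{prop:Hamiltonian}(ii) does not apply) is, if anything, slightly more careful than the paper's.
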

\begin{proof}
The case $n=1$ is obvious. Assume that
 $ n\geq 2$.
We show that $\big(\mathcal{B}_n(T_k),\mathcal{B}_n(C_{2n}), \{\subseteq\}\big)$ is an EKR chain.
We now check the conditions for being an EKR chain in Definition~\ref{def:EKRchain}, to prove this claim.
The fact that $\mathcal{B}_n(C_{2n})$ is an EKR family has been proven in Part~(ii) of Proposition~\ref{prop:Hamiltonian}. 
For every fixed Hamiltonian cycle $C\in \B_n(C_{2n})$, by applying  Theorem~\ref{thm:Talbot}, we know that the family of all $k$-matchings 
  in $C$ is an EKR family. 
The fact that all Hamiltonian cycles in $K_{n,n}$ contain the same number of $k$-matchings and every $k$-matching is contained in the same number of Hamiltonian cycles in $K_{n,n}$
 is trivial due to the symmetry.
Therefore,  by Lemma~\ref{lem:composition},
the family of all $k$-matchings in $K_{n,n}$  is an EKR family.
\end{proof}



We now present the proof of Theorem \ref{thm:cycle}.
\begin{proof}[Proof of Theorem \ref{thm:cycle}]

We divide the proof into three cases depending on $k$.
\begin{itemize}
\item Case (i): $k=3$.
In this case, two $3$-cycles share an edge if and only if their vertex sets have two common vertices. Then, the assertion follows from Part~(ii) of Theorem~\ref{thm:EKR} by taking $X=V(K_n)$, $k=3$, and $t=2$.

\item Case (ii): $k\geq 5$.
In Lemma~\ref{lem:composition}, we take $\LL=\mathcal{F}_n( C_k)$, $\mathcal{M}= \mathcal{F}_n(K_k)$, and $\sim_{\I}=\{\subseteq\}$. 
We show that $\big(\mathcal{F}_n( C_k), \mathcal{F}_n(K_k),\{\subseteq\}\big)$ is an EKR chain.
We proceed to verify each of the conditions stated in
 Definition~\ref{def:EKRchain}.
  
  Two $k$-cliques in $K_n$ share an edge if and only if their vertex sets have two common vertices. 
Since $n\geq 3(k-1)$,  by taking $X=V(K_n)$ and $t=2$, it follows from Part~(ii) of Theorem~\ref{thm:EKR}   that $\mathcal{F}_n(K_k)$ is an EKR family.
Let
 $Q \in \mathcal{F}_n(K_k)$ be a $k$-clique in the complete graph $K_n$, and define  
 $$\mathcal{L}_{Q}\isdef  \{ C \mid C \text{ is a } k\text{-cycle contained in } Q \}.$$  
 As established in Part~(i) of Proposition~\ref{prop:Hamiltonian}, the family $\mathcal{L}_{Q}$ satisfies the EKR property. 
 Note that each $k$-clique in $K_{n}$ contains exactly $\tfrac{(k-1)!}{2}$  $k$-cycles and every $k$-cycle is contained in exactly one $k$-clique  in $K_{n}$.
 Consequently, the assertion  follows by 
Part~(i) of the composition lemma.
 
 Now assume that $n> 3(k-1)$. Then,  by taking $t=2$, it follows from Part~(ii) of Theorem~\ref{thm:EKR}   that $\mathcal{F}_n(K_k)$ is a strong EKR family.
Assume that $Q_1\in \mathcal{F}_n(K_k)$ and $xy$ is one edge of $Q_1$. Since $n>3(k-1)>2k-2$, there exists $Q_2\in \mathcal{F}_n(K_k)$ such that
$E(Q_1)\cap E(Q_2)=\{xy\}.$
Therefore, $(\mathcal{F}_n( C_k), \mathcal{F}_n(K_k), {\subseteq})$ is a special EKR chain, and hence, by Part~(ii) of the composition lemma, $\mathcal{F}_n( C_k)$ is a strong EKR family.
 \item Case (iii): $k=4$.
  Since the edge set of $K_4$ does not decomposes into copies of $C_4$ but  $K_9$ admits such a decomposition (see \cite[Theorem1.1]{Bryant}),
   we work with $\mathcal{F}_n(K_9)$ instead of $\mathcal{F}_n(K_4)$. 
Since $n\geq 24$, it follows from Theorem~\ref{thm:EKR}   that $\mathcal{F}_n(K_9)$ is an EKR family.
 Take $\LL=\mathcal{F}_n( C_4)$, $\mathcal{M}= \mathcal{F}_n(K_9)$, and $\sim_{\I}=\{\subseteq\}$.
 We show that $\big(\mathcal{F}_n( C_4), \mathcal{F}_n(K_9),\{\subseteq\}\big)$ is an EKR chain and a special EKR chain, when $n\geq 24$ and $n\geq 25$, respectively.
Let
 $Q \in \mathcal{F}_n(K_9)$ be a $9$-clique in the complete graph $K_n$. 
 Define  
 
 $$\mathcal{L}_{Q}\isdef  \{ C \mid C \text{ is a } 4\text{-cycle contained in } Q \}.$$  
 
\noindent 
The rest of the proof proceeds by an argument similar to that  in the case $k \geq 5$.

 \end{itemize}
\end{proof}

\begin{proof}[Proof of Theorem \ref{thm:bipcycle}]
Consider the bipartition $(X,Y)$  of  the complete bipartite graph $K_{n,n}$. 
Let $\mathcal{B}_n(K_{k,k})$ denote the family of all subgraphs of $K_{n,n}$ that are isomorphic to $K_{k,k}$.
In the composition lemma (Lemma~\ref{lem:composition}), we take $\LL=\mathcal{B}_n( C_{2k})$, $\mathcal{M}= \mathcal{B}_n(K_{k,k})$, and $\sim_{\I}=\{\subseteq\}$, where $\subseteq$ denotes the subgraph inclusion relation. 
We show that $\big(\mathcal{B}_n( C_{2k}), \mathcal{B}_n(K_{k,k}),\{\subseteq\}\big)$ is an  EKR chain whenever $n\geq 2k$ and moreover,
 it  is a special EKR chain whenever $n>2k$. Consequently, the desired result follows by 
 Parts~(1) and (2) of  the composition lemma.

We proceed to verify each of the conditions stated in
 Definition~\ref{def:EKRchain}.
 To this end,  we require the following claim. We postpone the proof of Claim~\ref{claim}  until the end of  Section~\ref{sec:Defer}.
\begin{claim}\label{claim} 
For $n\geq 2k$,  the family $\mathcal{B}_n(K_{k,k})$ is an  EKR family,  and for any $n > 2k$, it is a strong EKR family.

\end{claim}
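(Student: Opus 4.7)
The plan is to reduce the claim directly to the classical Erdős–Ko–Rado theorem by a product/projection argument, without invoking the composition machinery. The key observation is structural: a copy of $K_{k,k}$ in $K_{n,n}$ with bipartition $(X,Y)$ is uniquely determined by an ordered pair $(A,B)$ with $A\in\binom{X}{k}$ and $B\in\binom{Y}{k}$, and two copies $(A_1,B_1)$ and $(A_2,B_2)$ share an edge of $K_{n,n}$ if and only if both $A_1\cap A_2\neq\varnothing$ and $B_1\cap B_2\neq\varnothing$. Moreover, for any edge $e=xy$ of $K_{n,n}$, the star $\mathcal{B}_n(K_{k,k})_e$ consists of all pairs $(A,B)$ with $x\in A$ and $y\in B$, and has size exactly $\binom{n-1}{k-1}^2$. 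So the task is to prove that no intersecting subfamily exceeds this size.

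Given an intersecting subfamily $\mathcal{F}\subseteq \mathcal{B}_n(K_{k,k})$, I would form its two projections $\widehat{\mathcal{A}}\isdef\{A:\exists B,\,(A,B)\in\mathcal{F}\}\subseteq\binom{X}{k}$ and $\widehat{\mathcal{B}}\isdef\{B:\exists A,\,(A,B)\in\mathcal{F}\}\subseteq\binom{Y}{k}$. The intersecting property of $\mathcal{F}$ forces both $\widehat{\mathcal{A}}$ and $\widehat{\mathcal{B}}$ to be intersecting in the classical sense: for any $A_1,A_2\in\widehat{\mathcal{A}}$ with witnesses $B_1,B_2$, the pair must share an edge, so $A_1\cap A_2\neq\varnothing$, and analogously for $\widehat{\mathcal{B}}$. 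Applying Theorem~\ref{thm:EKR}(i) on both parts of the bipartition, since $n\geq 2k$, we get $|\widehat{\mathcal{A}}|\leq\binom{n-1}{k-1}$ and $|\widehat{\mathcal{B}}|\leq\binom{n-1}{k-1}$. Using the trivial containment $\mathcal{F}\subseteq \widehat{\mathcal{A}}\times\widehat{\mathcal{B}}$, we immediately obtain
\[
|\mathcal{F}|\;\leq\; |\widehat{\mathcal{A}}|\cdot|\widehat{\mathcal{B}}|\;\leq\;\binom{n-1}{k-1}^{\!2},
\]
which establishes the EKR property.

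For the strong EKR assertion under $n>2k$, suppose $|\mathcal{F}|=\binom{n-1}{k-1}^2$. Then the chain of inequalities above collapses to equalities, which forces (a) $|\widehat{\mathcal{A}}|=|\widehat{\mathcal{B}}|=\binom{n-1}{k-1}$ and (b) $\mathcal{F}=\widehat{\mathcal{A}}\times\widehat{\mathcal{B}}$. By the uniqueness clause of Theorem~\ref{thm:EKR}(i), which applies precisely because $n>2k$, each projection must be a star: $\widehat{\mathcal{A}}=\{A\in\binom{X}{k}:x\in A\}$ and $\widehat{\mathcal{B}}=\{B\in\binom{Y}{k}:y\in B\}$ for some $x\in X$, $y\in Y$. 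Combined with (b), this exhibits $\mathcal{F}$ as the star of the edge $xy$, proving the strong EKR property.

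The argument is essentially a cross-intersection reduction, so no serious obstacle is anticipated; the only point requiring a touch of care is the equality analysis, namely verifying that equality in $|\mathcal{F}|\leq|\widehat{\mathcal{A}}|\cdot|\widehat{\mathcal{B}}|$ genuinely forces $\mathcal{F}$ to be the full Cartesian product, so that the uniqueness statement of classical EKR may be combined coordinate-wise. This is straightforward from $|\mathcal{F}|=|\widehat{\mathcal{A}}|\cdot|\widehat{\mathcal{B}}|$ and $\mathcal{F}\subseteq\widehat{\mathcal{A}}\times\widehat{\mathcal{B}}$.
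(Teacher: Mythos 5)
Your proposal is correct and follows essentially the same route as the paper's own proof: project an intersecting family onto the two sides of the bipartition, observe both projections are intersecting, apply the classical Erd\H{o}s--Ko--Rado theorem (with its uniqueness clause when $n>2k$) to each side, and recover $\mathcal{F}$ as the star of a single edge. Your write-up is merely a bit more explicit than the paper's about why equality forces $\mathcal{F}=\widehat{\mathcal{A}}\times\widehat{\mathcal{B}}$, a step the paper leaves to the reader.
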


For any
 $Q \in \mathcal{B}_n(K_{k,k})$, define  
 $$\mathcal{L}_{Q}\isdef  \{ C|\, C \text{ is a } 2k\text{-cycle contained in } Q \}.$$
   
 As established in Part~(ii) of Proposition~\ref{prop:Hamiltonian}, the family $\mathcal{L}_{Q}$ satisfies the EKR property.
 
 Each $2k$-cycle $C$ is contained in exactly one $Q\in  \mathcal{B}_n(K_{k,k})$. 
 Also, each $Q\in  \mathcal{B}_n(K_{k,k})$ contains $\tfrac{(k-1)!k!}{2}$ $2k$-cycles. 
 Therefore,  $\big(\mathcal{B}_n( C_{2k}), \mathcal{B}_n(K_{k,k}),\{\subseteq\}\big)$ is an  EKR chain for $n\geq 2k$.

 By Claim~\ref{claim}, we know that $\mathcal{B}_n(K_{k,k})$ is a strong EKR family for $n>2k$. To verify the second condition in Part (2) of
 the composition lemma, 
assume that  $Q_1 \in \mathcal{B}_n(K_{k,k})$ is a complete bipartite graph  with bipartition $(A_1,B_1)$. 
Take a $k$-subset $A_2\subset X$ and  a $k$-subset $B_2\subset Y$ such that 
$|A_1\cap A_2|=1$ and 
$|B_1\cap B_2|=1$. Consider the complete bipartite graph $Q_2$ with bipartition $(A_2,B_2)$.   It is straight forward to verify  that $|E(Q_1)\cap E(Q_2)|=1$.
Therefore, $\big(\mathcal{B}_n( C_{2k}), \mathcal{B}_n(K_{k,k}),\{\subseteq\}\big)$ is a special   EKR chain for $n> 2k$.

\end{proof}
\subsection{ The EKR Property for $H$-Copies in the Complete Bipartite Graph $K_{n,n}$}

\hfill

To prove Theorem~\ref{thm:bipgraphEKR}, we need to show that there exists a positive integer $n$ such that 
the edge set of  $K_{n,n}$ can be decomposed into copies of a fixed bipartite graph $H$. 
This can be viewed as a bipartite analogue of Wilson’s theorem on edge decompositions of complete graphs into copies of a fixed graph 
$H$~\cite{Wilson2}.
\begin{alphtheorem}{\rm\cite{Haggkvist}\label{thm:bip}}
For any bipartite graph $H$, there exists an positive integer $n=n(H)$ such that  the edge set of $K_{n,n}$ can be decomposed into subsets each of which forms the edge set of a copy of $H$.
\end{alphtheorem}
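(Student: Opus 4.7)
The plan is to establish Theorem~\ref{thm:bip} along the lines of Wilson-style decomposition theorems adapted to the bipartite setting. Let $H$ be a bipartite graph with parts $A$ and $B$ of sizes $a$ and $b$ respectively, and let $e = |E(H)|$.

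I would first record the necessary divisibility conditions: for $K_{n,n}$ to admit a decomposition into copies of $H$, one needs $e \mid n^2$, and inspecting the degree at each vertex of $K_{n,n}$ against the degree sequences of $H$ on each side yields gcd-type constraints on $n$. Restrict attention to $n$ in an arithmetic progression satisfying these conditions; since the theorem only asks for a single workable $n$, I only need to exhibit one admissible value.

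The core of the proof is the base case: producing at least one value $n_0$ from the admissible progression for which $K_{n_0,n_0}$ actually decomposes into copies of $H$. Two natural routes present themselves. The first is a R\"odl-nibble style argument applied to the auxiliary hypergraph whose vertices are the edges of $K_{n,n}$ and whose hyperedges are the edge sets of copies of $H$; this yields a near-decomposition leaving $o(n^2)$ edges uncovered, which can then be absorbed via pre-reserved structures. The second route is to invoke Keevash's general existence theorem for designs, adapted to the bipartite setting, which delivers a decomposition for every sufficiently large admissible $n$ in one stroke. A third, more constructive route (closer to H\"aggkvist's original argument) uses Latin-square / starter constructions: place one well-chosen copy of $H$ inside $K_{n_0,n_0}$ and translate it through a transitive group action on the bipartition, tuning the initial placement so that each edge is covered exactly once.

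Once one good $n_0$ is secured, a blow-up argument concludes: for any $m \geq 1$, partition each side of $K_{mn_0, mn_0}$ into $m$ blocks of size $n_0$, and decompose each of the $m^2$ resulting edge-disjoint copies of $K_{n_0, n_0}$ into copies of $H$ via the base case, yielding a decomposition of $K_{mn_0, mn_0}$. The main obstacle is unquestionably the base case: the divisibility reduction and the blow-up are routine, while extracting even one workable $n_0$ either demands a delicate algebraic construction tailored to the degree sequence of $H$, or an appeal to the deep design-existence machinery. Any of these gives the single value $n(H)$ required in the statement.
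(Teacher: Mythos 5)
The paper offers no proof of this statement at all: it is imported verbatim as Theorem~\ref{thm:bip} from H\"aggkvist's survey \cite{Haggkvist} and used purely as a black box in the proof of Theorem~\ref{thm:bipgraphEKR}, exactly parallel to the way Theorem~\ref{prop:decomposition} is imported from \cite{MR4572074} for the hypergraph case. So there is no internal argument to compare yours with; at the level of this paper the ``proof'' is the citation.

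Judged as a self-contained argument, your proposal has a genuine gap at precisely the point you yourself flag as the core: the base case is never actually established. You list three possible routes --- R\"odl nibble plus absorption, Keevash-type existence theorems ``adapted to the bipartite setting'', and a translation/starter construction --- but none is executed, and in each case the step you wave through is the entire difficulty. Converting a near-decomposition with $o(n^2)$ leftover edges into an exact decomposition is the whole content of the iterative-absorption machinery (this is what \cite{MR4572074} achieves in the $r$-uniform setting); the partite adaptation of the design-existence theorems is a substantial theorem in its own right rather than a routine transfer; and the starter route requires producing an embedding of $H$ in $K_{n_0,n_0}$ with vertex classes labelled by $\mathbb{Z}_{n_0}$ whose edges realize distinct differences covering what is needed --- ``tuning the initial placement so that each edge is covered exactly once'' is exactly what has to be proved, and such labellings are not known to exist for arbitrary bipartite $H$ without further work. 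What you do correctly is the routine part: the divisibility reduction, and the valid blow-up observation that $K_{mn_0,mn_0}$ splits into $m^2$ edge-disjoint copies of $K_{n_0,n_0}$; note, however, that the statement asks only for a single value $n(H)$, so the blow-up and the ``all sufficiently large admissible $n$'' framing are unnecessary. As written, your plan must in the end fall back on citing a decomposition theorem of essentially the same strength as the statement itself --- which is what the paper does directly by quoting \cite{Haggkvist}.
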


\begin{proof}[Proof of Theorem~\ref{thm:bipgraphEKR}]
By using Theorem~\ref{thm:bip}, there exists  a positive integer $n_0=n(H)$ such that  the edge set of $K_{n_0,n_0}$ can be decomposed into subsets each of which forms the edge set of a copy of $H$.

Let $n> 2n_0$. Let $\mathcal{B}_n(K_{n_0,n_0})$ denote the family of all subgraphs of $K_{n,n}$ that are isomorphic to $K_{n_0,n_0}$.
Let $\mathcal{B}_n(H)$ denote the family of all subgraphs of $K_{n,n}$ that are isomorphic to $H$.
In Lemma~\ref{lem:composition}, take $\LL=\mathcal{B}_n(H)$, $\mathcal{M}= \mathcal{B}_n(K_{n_0,n_0})$, and $\sim_{\I}=\{\subseteq\}$, where $\subseteq$ denotes the subgraph inclusion relation. 
We show that $\big(\mathcal{B}_n(H), \mathcal{B}_n(K_{n_0,n_0}),\{\subseteq\}\big)$ is a special EKR chain.
We proceed to verify each of the conditions stated in
 Definition~\ref{def:EKRchain}.
 
\noindent  First note that by Claim~\ref{claim},  $\mathcal{B}_n(K_{n_0,n_0})$ is a strong  EKR family.

\noindent For any
 $Q \in \mathcal{B}_n(K_{n_0,n_0})$, define  
 $$\mathcal{L}_{Q}\isdef  \{ H'|\, H' \text{ is a subgraph of } Q \text{ that is isomorphic to } H \}.$$

 \noindent Now, we show that the family $\mathcal{L}_{Q}$ satisfies the EKR property.
  This claim follows from the fact that  $\mathcal{L}_{Q}$ is  $(S_{n_0}\times S_{n_0}\times {\mathbb Z}_2,1)$-balanced, where $S_{n_0}$
   is the permutation group on the vertex of one part of $Q(=K_{n_0,n_0})$.
Observe that the automorphism group of the complete bipartite graph $K_{n_0,n_0}$ is
\[
\operatorname{Aut}(K_{n_0,n_0}) \cong \left(S_{n_0} \times S_{n_0}\right) \times \mathbb{Z}_2.
\]
Here, each symmetric group $S_{n_0}$ acts on one of the two parts of the graph, while the factor $\mathbb{Z}_2$ corresponds to swapping the two parts.

Note that the action of the group $S_{n_0}\times S_{n_0}\times {\mathbb Z}_2$ on the set of the vertices of $Q(=K_{n_0,n_0})$ naturally extends to the set of the edges of $Q(=K_{n_0,n_0})$, and in turn, to the set 
of the copies of $H$ in $Q(=K_{n_0,n_0})$, i.e. $\B_{n_0}(H)$.

  Since $K_{n_0,n_0}$ can be decomposed into disjoint copies of $H$,  
Conditions~(ii) and (iii) of  Definition~\ref{balance} for being $(S_{n_0}\times S_{n_0}\times 
{\mathbb Z}_2,1)$-balanced hold.
Therefore, $\LL_Q$ is in fact $(S_{n_0}\times S_{n_0}\times {\mathbb Z}_2,1)$-balanced. Now, Lemma \ref{lem:gbalanceisEKR} guarantees that $\LL_Q$ is an EKR family. 
  Note that  by symmetry each copy of $H$ lies in the same number of copies of $K_{n_0,n_0}$ in $K_{n_,n}$ and  each copy of $K_{n_0,n_0}$ in $K_{n,n}$ 
 contains the same number of copies of $H$.
 Hence, $\big(\mathcal{B}_n(H), \mathcal{B}_n(K_{n_0,n_0}),\{\subseteq\}\big)$ is an EKR chain.
 

Consider $Q_1 \in \mathcal{B}_n(K_{n_0,n_0})$   on the  bipartition $(A_1,B_1)$. Take an $n_0$-subset $A_2\subset X$ and  an $n_0$-subset $B_2\subset Y$ such that 
$|A_1\cap A_2|=1$ and 
$|B_1\cap B_2|=1$. Consider the complete bipartite graph $Q_2$ with bipartition $(A_2,B_2)$.   It is easy to verify that $|E(Q_1)\cap E(Q_2)|=1$.

 Consequently, the desired result follows by 
 Parts (1) and (2) of  the composition lemma.
\end{proof}

\subsection{ The EKR Property for $H$-Copies in Uniform Hypergraphs}

\hfill
\vspace{0.1cm}

 
Before proving  Theorem~\ref{thm:hypergraphEKR}, we recall a useful and interesting result from \cite{MR4572074}, due to   Glock,  K{\" u}hn,  Lo and  Osthus, concerning the decomposition of the hyperedges of a complete $r$-uniform hypergraph into copies of a given $r$-uniform hypergraph.
 It is worth mentioning that in the case of graphs (i.e., when $r = 2$), this result was previously proved by Wilson in \cite{Wilson2}.

\begin{alphtheorem}[Weak version of Theorem 1.1 in \cite{MR4572074}]\label{prop:decomposition}
For any $r$-uniform hypergraph $H$, there exists an integer $n=n(H)$ such that  the hyperedge set of the complete $r$-uniform graph 
$K_n^{(r)}$ can be decomposed into subsets each of which forms the hyperedge set of a copy of $H$.
\end{alphtheorem}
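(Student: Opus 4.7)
The plan is to follow the absorption paradigm, which has become the standard modern route to existence results for combinatorial designs. The first observation is that we have complete freedom in choosing $n=n(H)$; so, rather than trying to prove the statement for every $n$ satisfying the natural divisibility conditions, I would restrict $n$ to an appropriate arithmetic progression, chosen so that the global condition $e(H) \mid \binom{n}{r}$ and the analogous local divisibility conditions on the $i$-link hypergraphs for $0 \le i \le r-1$ are automatically met; this is easy because all these conditions are modular in $n$.

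I would then split the argument into three stages. \textbf{Stage 1 (Absorbers).} Build a small $H$-decomposable sub-hypergraph $A \subseteq K_n^{(r)}$ with the \emph{absorbing property} that for every sufficiently small leftover sub-hypergraph $L$ disjoint from $A$ and satisfying the relevant divisibility conditions, the union $A \cup L$ admits an $H$-decomposition. \textbf{Stage 2 (Nibble).} Apply the R\"odl--Pippenger--Spencer random greedy procedure to the auxiliary hypergraph whose vertices are the hyperedges of $K_n^{(r)} \setminus A$ and whose (hyper)edges are the copies of $H$. Since this auxiliary hypergraph is essentially regular and has small codegree relative to its degree, standard concentration yields a packing covering all but an arbitrarily small fraction of the hyperedges. \textbf{Stage 3 (Absorption).} Invoke the defining property of $A$ on the tiny leftover $L$ from Stage 2 to complete the partial packing into a full $H$-decomposition of $K_n^{(r)}$.

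The main obstacle, by a large margin, is Stage 1: constructing flexible absorbers for hypergraph designs. This requires fabricating ``transformer'' gadgets that swap one partial $H$-packing on a collection of hyperedges for another packing with the same symmetric difference, and then stacking these gadgets consistently with the divisibility lattice generated by the hypergraph $H$. Both Keevash's proof of the existence conjecture and the Glock--K\"uhn--Lo--Osthus argument in \cite{MR4572074} rely on delicate iterated constructions of such gadgets together with a ``boosting'' mechanism that reduces the problem of absorbing an arbitrary divisible leftover to absorbing only very structured pieces. In a short proposal I would therefore treat Stage 1 as a black box imported from \cite{MR4572074}, since it is the main technical contribution of that paper and cannot be sketched honestly in a few lines, while giving a self-contained outline of the comparatively standard probabilistic analyses (Chernoff-type concentration and deletion method) needed in Stages 2 and 3.
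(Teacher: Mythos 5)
The paper offers no proof of this statement: it is an imported result, quoted (in weakened form, fixing a single admissible $n$) as Theorem 1.1 of \cite{MR4572074}, so the ``proof'' is the citation itself. Your outline of the iterative-absorption argument accurately describes how that theorem is actually established, but since you explicitly black-box the decisive absorber/transformer construction from the very same reference, your proposal is in substance identical to the paper's treatment (a citation) and is correct as such.
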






We are now ready to prove Theorem \ref{thm:hypergraphEKR}.
\begin{proof}[Proof of Theorem \ref{thm:hypergraphEKR}]
Let $H$ be any arbitrary $r$-uniform hypergraph. By Theorem~\ref{prop:decomposition}, there exists a positive integer  $n_0$ such that the edge set of the complete $r$-uniform hypergraph $K_{n_0}^{(r)}$ can be decomposed into copies of $H$.
 Let $n> (r+1)(n_0-r+1)$
be an arbitrary integer number.   Let $\mathcal{F}_n(K_{n_0}^{(r)})$ denote the family of all subhypergraphs of $K_{n}^{(r)}$ that are isomorphic to $K_{n_0}^{(r)}$.
Let $\F_n(H)$ denote the family of all subhypergraphs of $K_{n}^{(r)}$ that are isomorphic to $H$.
In Lemma~\ref{lem:composition}, take $\LL=\mathcal{F}_n(H)$, $\mathcal{M}= \mathcal{F}_n(K_{n_0}^{(r)})$, and $\sim_{\I}=\{\subseteq\}$, where $\subseteq$ denotes the subhypergraph inclusion relation. 
We show that $\big(\mathcal{F}_n(H), \mathcal{F}_n(K_{n_0}^{(r)}),\{\subseteq\}\big)$ is a special EKR chain.
We proceed to verify each of the conditions stated in
 Definition~\ref{def:EKRchain}.



We first show that $ \mathcal{F}_n(K_{n_0}^{(r)})$ is a strong EKR family.  Note that two copies of  $K_{n_0}^{(r)}$ share a hyperedge if and only if their vertex sets have $r$ common vertices. 
 By taking $k=n_0$ and $t=r$, it follows from Part~(ii) of Theorem~\ref{thm:EKR} that   $ \mathcal{F}_n(K_{n_0}^{(r)})$ is a strong EKR family.  Thus, we conclude that  the  size of a intersecting family $\F$ among the elements of $\mathcal{F}_n(K_{n_0}^{(r)})$ is at most  $\binom{n-r}{n_0-r}$ and
  equality holds if and only if $\F$  consists of  all copies of $K_{n_0}^{(r)}$ in   $K_{n}^{(r)}$ that contain
a given subset of $r$ fixed vertices  of the vertex set of  $K_{n}^{(r)}$.
  

Now, we show that the triple $\big(\F_n(H), \mathcal{F}_n(K_{n_0}^{(r)}), \{\subseteq\}\big)$ satisfies the second condition of  Part~(1) in the definition of an EKR chains. 
To show this, we must prove that for every $Q\in\mathcal{F}_n(K_{n_0}^{(r)})$ the family $\LL_Q$ which is equal to the family of all subhypergraphs of $Q(= K_{n_0}^{(r)})$
 that are isomorphic to $H$, is EKR. This claim follows from the fact that $\F_{n_0}(H)$ is  $(S_{n_0},1)$-balanced, where $S_{n_0}$ is the 
 permutation group of the vertex of $K_{n_0}^{(r)}$.
Note that the action of the group $S_{n_0}$ on the set of the vertices naturally extends to the set of the hyperedges, and in turn, to the set of the copies of $H$ in $K_{n_0}^{(r)}$,
 i.e. $\F_{n_0}(H)$.

Since $K_{n_0}^{(r)}$ can be decomposed into disjoint copies of $H$,  Conditions~(ii) and (iii) of  Definition~\ref{balance} for being  $(S_{n_0},1)$-balanced hold.
 Therefore, $\LL_Q$ is in fact $(S_{n_0},1)$-balanced. Now, Lemma \ref{lem:gbalanceisEKR} guarantees that $\LL_Q$ is an  EKR family. 
 
Similar to the previous proof,  by symmetry each copy of $H$ is contained in the same number of copies of $K_{n_0}^{(r)}$ in $K_{n}^{(r)}$ 
and  each copy of $K_{n_0}^{(r)}$ in $K_n^{r}$ 
 contains the same number of copies of $H$.
  
Let  $Q_1 \in \mathcal{F}_n(K_{n_0}^{(r)})$. Take a $n_0$-subset $A$ in $V(K_{n}^{(r)})$ such that 
$|A\cap V(Q_1)|=r$. Consider the  complete $r$-uniform hypergraph  $Q_2$ with vertex set $A$. It follows directly that $|E(Q_1)\cap E(Q_2)|=1$.

  Consequently, it  follows by 
 the composition lemma that $\big(\mathcal{F}_n(H), \mathcal{F}_n(K_{n_0}^{(r)}),\{\subseteq\}\big)$ is a special EKR chain.

\end{proof}
 \section{Deferred Proofs}~\label{sec:Defer}

In this section, we present the proofs of Lemma \ref{lem:composition} and Lemma \ref{lem:gbalanceisEKR}.
Consider an EKR chain $(\mathcal{L},\mathcal{M},\sim_{\I})$.
  Since  for every $M\in \mathcal{M}$ and $i\in \mathcal{I}$, the family  $\mathcal{L}^{^{(i)}}_{_M}$ is an EKR family,
 for every $a\in M$, the family $\{L\in\mathcal{L}^{^{(i)}}_{_M}|\, a\in L \}$ has the largest possible size among all intersecting subfamilies in $\mathcal{L}^{^{(i)}}_{_M}$. 
Therefore, this 
 size is independent of the choice of $a$. This observation helps us to find the size of the largest intersecting subfamilies in 
 $\mathcal{L}^{^{(i)}}_{_M}$. The next lemma helps us to find this value.

\begin{lemma}\label{lem:size}
Let $\ell\leq m\leq n$ be positive integers. 
Let $\mathcal{L}$ and $\mathcal{M}$ be  families of $\ell$-subsets and $m$-subsets of an $n$-element set $X$, respectively. Assume that $\sim_{\I}$ 
 is a family of  regular relations  from $\LL$ to $\M$.
 \begin{itemize}
     \item[(i)]     If
     each element $a\in X$ belongs to the same number of the elements of $\mathcal{M}$,
       then for any $a\in X$, we have $|\M_a|=\tfrac{m}{n}|\mathcal{M}|$, where $\M_a\isdef\{M\in\M|\, a\in M\}$.
    \item[(ii)]     Let  $\mathcal{M}$ be an EKR family in $X$. Then, the size of a largest intersecting subfamily of $\mathcal{M}$ is $\tfrac{m}{n}|\mathcal{M}|$.
     \item[(iii)]  Assume that  $(\mathcal{L},\mathcal{M},\sim_{\I})$ is an EKR chain. Then,
      for every $M\in \mathcal{M}$ and $i\in\I$, the size of a largest intersecting subfamily of $\mathcal{L}^{^{(i)}}_{_M}$ is $\tfrac{\ell}{m}|\mathcal{L}^{^{(i)}}_{_M}|$.
      \item[(iv)]   Assume that  $(\mathcal{L},\mathcal{M},\sim_{\I})$ is an EKR chain. Then,
       every element $a\in X$ belongs to the same number of the elements of $\mathcal{L}$.

 \end{itemize}
\end{lemma}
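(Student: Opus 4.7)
The plan is to prove the four parts in the order given, with each part feeding into the next. Part~(i) is a direct double count: the incidence set $\{(a,M) : a\in M,\ M\in\mathcal{M}\}$ has cardinality $m|\mathcal{M}|$ when summed over $M$ (each has size $m$) and cardinality $n|\mathcal{M}_a|$ when summed over $a$ (under the uniformity hypothesis), yielding $|\mathcal{M}_a|=m|\mathcal{M}|/n$. For part~(ii), the observation recorded immediately after Definition~\ref{def:EKRfamily} says that any EKR family satisfies $|\mathcal{M}_x|=|\mathcal{M}_y|$ for all $x,y\in X$, which is precisely the hypothesis of~(i); applying~(i) and using the defining property of EKR (that any $\mathcal{M}_a$ realizes the maximum intersecting-subfamily size) completes~(ii).

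Part~(iii) reduces to part~(ii). Fix $M\in\mathcal{M}$ and $i\in\mathcal{I}$. Regularity of $\sim_i$ forces every element of $\mathcal{L}^{^{(i)}}_{_M}$ to be an $\ell$-subset of $M$, so this family lives on the $m$-element ground set $M$. Condition~(ii) in the definition of an EKR chain states precisely that $\mathcal{L}^{^{(i)}}_{_M}$ is EKR on this ground set, so applying part~(ii) of this lemma to it gives the claimed size $\tfrac{\ell}{m}|\mathcal{L}^{^{(i)}}_{_M}|$ for a largest intersecting subfamily.

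The heart of the argument is part~(iv), which I would prove by a double count of the set of triples
\[
T_a \isdef \{(L,M,i) \mid i\in\mathcal{I},\ L\sim_i M,\ a\in L\}.
\]
Summing over $L$ first yields $|T_a|=\sum_{L\in\mathcal{L}_a}\sum_{i\in\mathcal{I}}|\mathcal{M}^{^{(i)}}_{_L}|$, and by condition~(iv) of an EKR chain the inner sum is a constant $C_1$ independent of $L$, so $|T_a|=C_1|\mathcal{L}_a|$. Summing over $M$ first --- using regularity so that $a\in L\subseteq M$ forces $a\in M$ --- gives $|T_a|=\sum_{M\in\mathcal{M}_a}\sum_{i\in\mathcal{I}}|\{L\in\mathcal{L}^{^{(i)}}_{_M} : a\in L\}|$. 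Since $a\in M$, the EKR property of $\mathcal{L}^{^{(i)}}_{_M}$ identifies the innermost set as a largest intersecting subfamily, whose size equals $\tfrac{\ell}{m}|\mathcal{L}^{^{(i)}}_{_M}|$ by part~(iii), and by condition~(iii) of an EKR chain this equals $\tfrac{\ell C_2}{m}$ for a constant $C_2$ independent of $M$ and $i$. Condition~(i) of an EKR chain combined with part~(ii) of the present lemma gives $|\mathcal{M}_a|=m|\mathcal{M}|/n$, independent of $a$. Equating the two expressions produces $|\mathcal{L}_a|=|\mathcal{M}||\mathcal{I}|\ell C_2/(nC_1)$, which is independent of $a$, as required.

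The main obstacle is bookkeeping rather than any deep step: one must apply parts~(i) and~(ii) to several different families --- some on the global ground set $X$, others on individual sets $M$ of size $m$ --- and invoke each of the four defining constants of an EKR chain at precisely the right moment so that the inner sums collapse to expressions uniform in $a$. Regularity of $\sim_i$, which links $L$ and $M$ through $L\subseteq M$, is what makes the second summation over $M\in\mathcal{M}_a$ well-posed.
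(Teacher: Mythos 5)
Your proposal is correct and follows essentially the same route as the paper: part~(i) by the same incidence double count, parts~(ii) and~(iii) by reducing to~(i) via the EKR property (with regularity placing $\mathcal{L}^{^{(i)}}_{_M}$ on the ground set $M$), and part~(iv) by the same two-way count of triples $(L,M,i)$ with $a\in L$, using condition~(iv) of an EKR chain for one side and parts~(ii)--(iii) together with conditions~(ii)--(iii) of the chain for the other. No substantive differences from the paper's argument.
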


\begin{proof}
 First, we  present a proof of Part~(i). 
Let $\mathcal{M}=\{M_1,M_2,\ldots,M_t\}$.
We count the number of $(a, M_j)$ where $a\in M_j$ in two ways. First, notice that each $M_j$ has size $m$, hence this number is equal to $mt$.
 Also, since every element $a\in X$ lies in $|\M_a|$ of the set $M_j$, we have   $n|\M_a|=mt$, and consequently, $|\M_a|={\tfrac{mt}{n}}$. 
 
   To prove (ii), since $\M$  is an  EKR family, for  any two elements $a_1,a_2 \in X$, we have
 $|\mathcal{M}_{a_1}|=|\mathcal{M}_{a_2}|$. Therefore, Part~(i) directly implies Part~(ii).

To prove (iii), since $\LL_{_{M}}^{^{(i)}}$  is an  EKR family, for  any two elements $b_1,b_2 \in M$, we have
$|{(\LL_{_{M}}^{^{(i)}})}_{b_1}|=|{(\LL_{_{M}}^{^{(i)}})}_{b_2}|$ where ${(\LL_{_{M}}^{^{(i)}})}_{b}\isdef\{L\in \LL_{_{M}}^{^{(i)}}|\, b\in L\}$ for any $b\in M$. 
Therefore, Part~(i) directly implies Part~(iii).

 To prove (iv), take an arbitrary element $a\in X$ and count the number of triples $(L,M,i)$ where $a\in L$, $L\in\LL$, $M\in\M$, and $L\sim_i M$ in two ways. We call each such triple, \textit{good}. 
 First, notice that
 the number of $\ell$-sets $L$ in $\LL$  which contain $a$ is equal to $|\LL_a|$. For any subset $L$, the number of good triples with the first part being $L$ is equal to 
  $\sum\limits_{j\in \I}{|\M^{^{(j)}}_L|}$. 
 According to Part~(iv) of the definition of the EKR chain, this number is independent from $L$. Therefore, the total number of good triples is 
 \[
 |\LL_a| \cdot\sum\limits_{j\in \I}{|\M^{^{(j)}}_L|}.
 \]

 For the second way of counting, note that $a$ appears in $|\M_a|$ $m$-subsets $M$ in $\M$ and for every choice of $i$, the subset $M$ contains exactly $|(\LL^{^{(i)}}_{_{M}})_a|$ $\ell$-subsets $L$ in $\LL$ such that $L\sim_i M$  and $a\in L$.  Therefore, the number of good triples is equal to 

  \[
  |(\LL^{^{(1)}}_{_{M}})_a|\cdot |\M_a|\cdot |\I|.
  \]
  In the above equation, we use the fact that $|(\LL^{^{(1)}}_{_{M}})_a|=|(\LL^{^{(i)}}_{_{M}})_a|$ for every $i\in\I$.
  Thus,
  $$|\LL_a| \cdot\sum\limits_{j\in \I}{|\M^{^{(j)}}_L|}=|(\LL^{^{(1)}}_{_{M}})_a|\cdot |\M_a|\cdot |\I|.$$
  From Parts~(ii) and (iii),  we have  $|\mathcal{M}_a|=\tfrac{m}{n}|\mathcal{M}|$ and 
 $|(\LL^{^{(1)}}_{_{M}})_a|={\tfrac{\ell}{m}}|(\LL^{^{(1)}}_{_{M}})|$. Therefore,  $|\mathcal{M}_a|$ and $|(\LL^{^{(1)}}_{_{M}})_a|$ do not depend on the choice of $a$. Then, we have
 \begin{align*}
 |\LL_a|&=\tfrac{|(\LL^{^{(1)}}_{_{M}})_a|\cdot |\M_a|\cdot |\I|}{\sum\limits_{j\in \I}|{\M^{^{(j)}}_{_L}}|}\\
 &=\tfrac{({\ell\over m}|\LL^{^{(1)}}_{_{M}}|)({m\over n}|\M|)\cdot |\I|}{\sum\limits_{j\in \I}|\M^{^{(j)}}_{_L}|}\\
 &= \tfrac{\ell|\LL^{^{(1)}}_{_{M}}|\cdot |\M|\cdot |\I|}{n(\sum\limits_{j\in \I}|\M^{^{(j)}}_{_L}|)}.
  \end{align*}
  
\end{proof}

 We are now ready to prove Lemma \ref{lem:composition}.

\begin{proof}[Proof of Lemma \ref{lem:composition}]
We first prove Part~(i) of the lemma.
Suppose that $(\mathcal{L},\mathcal{M},\sim_{\I})$ is an EKR chain. Consider a maximum size intersecting subfamily $\mathcal{L}'\subseteq \mathcal{L}$.
Define
\[
\widetilde{\M}\isdef\{ (M,\sim_i): M \in \mathcal{M} \text{ and } \exists L\in \mathcal{L}' \text{ such that }L \sim_i M \}.
\]

  Let $\mathcal{G}$ be a bipartite graph with parts $\mathcal{L}'$ and $\widetilde{\mathcal{M}}$.
The vertex $L\in \mathcal{L}'$ is adjacent to the vertex $(M,\sim_i)\in \widetilde{\mathcal{M}}$ when $L\sim_i M$. 
Define
 $$\M'\isdef\{M\in \M: \exists i\in \I \text { such that } (M,\sim_i) \in \widetilde{\mathcal{M}} \}.$$
  Since for each $M\in \M'$, there exist at most $|\I|$ elements of the form $(M,\sim_i)$ in $\widetilde{\mathcal{M}}$, we have 
\[
|\widetilde{\mathcal{M}}| \leq |\M'|\cdot |\I|.
\]

Note that  $\mathcal{M}'$ is an intersecting subfamily of $\mathcal{M}$. Indeed,
for every pair $M_1,M_2\in \mathcal{M}'$,  by the definition of $\mathcal{M}'$, there exist $i_1$ and $i_2$ in $\mathcal{I}$ such that
$(M_1,\sim_{i_1}) \in \widetilde{\mathcal{M}}$ and  
$(M_2,\sim_{i_2}) \in \widetilde{\mathcal{M}}$. By the definition of $\widetilde{\mathcal{M}}$, there exist $L_1$ and $L_2$ in $\mathcal{L}'$ such that 
$L_1\sim_{i_1} M_1$ and $L_2\sim_{i_2} M_2$.  In particular, $L_1\subseteq M_1$ and $L_2\subseteq M_2$.
Since $\mathcal{L}'$ is an intersecting subfamily of $\mathcal{L}$, therefore
we have $L_1\cap L_2\neq \varnothing$, and consequently,   $M_1\cap M_2  \neq \varnothing$.

Since $\M$ is an EKR family in $X$  and $\mathcal{M}'$ is an intersecting subfamily of $\mathcal{M}$, thus we may apply Part~(ii) of Lemma \ref{lem:size} to conclude that $|\mathcal{M}'|\leq \tfrac{m}{n}|\mathcal{M}|$. Therefore, $|\widetilde{\mathcal{M}}| \leq  \tfrac{m}{n}|\M| \cdot|\I|$.
\medskip

We now determine upper and lower bounds on the number of the edges of $\mathcal{G}$. For every vertex $(M,\sim_i) \in \widetilde{\mathcal{M}}$, consider the set of its neighbors in $\mathcal{L}'$. These neighbors correspond to an intersecting subfamily of $\mathcal{L}^{^{(i)}}_{_M}$. Since $\mathcal{L}^{^{(i)}}_{_M}$ is EKR family, thus  $(M,\sim_i)$ has at most $\tfrac{\ell}{m}|\mathcal{L}^{^{(i)}}_{_M}|$ neighbors in $\mathcal{L}'$, according to Part~(iii) of Lemma \ref{lem:size}. 

Since $\widetilde{\mathcal{M}}$ contains at most $\tfrac{m}{n}|\mathcal{M}|\cdot|\I|$ vertices and each vertex has at most $\tfrac{\ell}{m}|\mathcal{L}^{^{(i)}}_{_M}|$ neighbors in 
$\mathcal{L}'$, the number of the edges of $\mathcal{G}$ is at most $\tfrac{\ell}{n}|\mathcal{M}|\cdot|\mathcal{L}^{^{(i)}}_{_M}|\cdot |\I|$. 
 
 Take an arbitrary $L\in \mathcal{L}'$.  The degree of $L$ in $\mathcal{G}$ is equal to $\sum_{i\in \I}|\mathcal{M}^{^{(i)}}_{_L}|$. 
 According to Definition~\ref{def:EKRchain}, this sum is 
 independent of the choice of $L$.
Therefore, the total number of the edges of $\mathcal{G}$ is equal to $|\mathcal{L}'| \sum\limits_{i\in \I}|\mathcal{M}^{^{(i)}}_{_L}|$.
Comparing this to the upper bound on the number of edges of $\G$, namely $\tfrac{\ell}{n}|\mathcal{M}|\cdot |\I| \cdot |\mathcal{L}^{^{(i)}}_{_M}|$ we obtain the following inequality:
\begin{equation}\label{eq:boundsonedges}
     |\mathcal{L}'| (\sum\limits_{i\in \I}|\mathcal{M}^{^{(i)}}_{_L}|)\leq \tfrac{\ell}{n}|\mathcal{M}|\cdot|\mathcal{L}^{^{(i)}}_{_M}|\cdot |\I| 
\end{equation}

Consequently, we have

\begin{equation}\label{eq:LprimelessthanLa}
    |\LL'| \leq \tfrac{\ell |\mathcal{M}|\cdot|\mathcal{L}^{^{(i)}}_{_M}|\cdot |\I|}{n (\sum\limits_{i\in \I}|\mathcal{M}^{^{(i)}}_{_L}|)} = |\LL_a|.
\end{equation}
    Note that the last equality follows from Part~(iv) of Lemma~\ref{lem:size}



This concludes the assertion of Part~(i). Note that if $\LL'$ is a maximum intersecting subfamily of $\LL$, we must have $|\LL'|=|\LL_a|$. Furthermore, 
each inequality in the preceding proof becomes an equality. In particular, $\M'$ is a maximum intersecting subfamily of $\M$. 
This  observation is crucial for the proof of the next part of the lemma. 
\\

Now, suppose that $(\mathcal{L},\mathcal{M},\sim_{\I})$ is a special EKR chain. Let $\LL'$ be a maximum size intersecting subfamily of $\LL$. We must prove that there exists an element $a$ such that all the members of $\LL'$ include $a$. 
As we mentioned above, $\M'$ is a maximum intersecting subfamily  $\M$. Since $(\mathcal{L},\mathcal{M},\sim_{\I})$ is assumed to be a special EKR chain, the subfamily of $\M'$ is identical to a 
subfamily $\M_a$ for some element $a$. We prove that for this 
choice of $a$ we have $\LL'=\LL_a$. For a contradiction, suppose that there exists $L_1\in\LL'$ such that $a\notin L_1$. 
Since in Inequality \ref{eq:boundsonedges}, the equality holds and the right hand side is not $0$ therefore the left hand side is not $0$. Hence, $\sum\limits_{i\in \I}|\mathcal{M}^{^{(i)}}_{_L}|\neq 0$. Consequently,
there exists $M_1\in\M$  and ${i_1} \in \I$ such that $L_1\sim_{i_1} M_1$.
 As $L_1\subseteq M_1$, we have $M_1\in \M'=\M_a$. 
 
 Let $M_2$ be an 
element of $\M$ such that $M_1\cap M_2 =\{a\}$. 
Notice that such $M_2$ exists as $(\mathcal{L},\mathcal{M},\sim_{\I})$ is a special EKR chain. Therefore, $M_2\in \M_a= \M'$.
 Since $M_2\in \M'$  there exists at least one element $L_2\in \LL'$ and $i_2\in \I$ such that $L_2\sim_{i_2} M_2$, and consequently, $L_2\subseteq M_2$. 
Therefore, $L_1,L_2$ belong to the 
intersecting subfamily $\LL'$. This is a contradiction since $L_1\cap L_2 \subseteq M_1\cap M_2=\{a\}$, while $a\notin L_1$ and $L_1\cap L_2\neq \emptyset$.

\end{proof}




\begin{proof}[Proof of Lemma \ref{lem:gbalanceisEKR}]

We  apply Part~(i) of the composition lemma.  In this lemma, take   $\LL=\F$, $\M=\{X\}$, and $\I=  G$. For every $g\in G$, define $\sim_g$ to be the following regular relation induced by $g$.
For any element $F\in \F$, define $F\sim_g X$ if and only if $gF\in \{D_1,D_2,\ldots,D_r\}$. Note that $\sim_g$ is a regular relation since $F\subseteq X$ regardless of any extra condition. 

We claim that $(\F,\{X\},\sim_G)$ is an EKR chain. The condition that $\M$ is an EKR family holds trivially since $\M$ consists of only one element, namely $X$.

Note that $\LL^{(g)}_X = \{g^{-1}D_1, g^{-1}D_2, \ldots, g^{-1}D_r\}$. The second requirement for being an EKR chain is to show that, for every \(g \in G\), 
the set $\LL^{(g)}_X$ forms an EKR family. This condition is satisfied due to Condition~(iii) of the definition of a $(G, j)$-balanced family.
Since for any two members  $g, g'$ of $G$ we have $|\LL^{(g)}_X| =|\LL^{(g')}_X |=r$, the third condition of being an EKR chain holds.
Take two arbitrary elements $F, F'\in \mathcal{F}$. There exists some $g\in G$ such that $gF=F'$. 
Let $g_1,\ldots,g_r$ be elements of  $G$ such that $F\sim_{g_i} X$ for each $1\leq i\leq r$. One can see that 
$g_1g^{-1},\ldots,g_rg^{-1}$ are elements of  $G$ such that $F'\sim_{g_ig^{-1}} X$ for each $1\leq i\leq r$. Therefore, the fourth condition of the definition of the EKR chain  also holds.
Thus, the assertion of the lemma follows directly from Part~(i) of Lemma \ref{lem:composition}.
\end{proof}


\begin{proof}[Proof of Claim \ref{claim}]

 Let  $\F$ be a maximum intersecting family of $\mathcal{B}_n(K_{k,k})$. Define $\F_X$ and $\F_Y$ as follows,
$$\F_X\isdef\{A|\, \exists Q\in  \mathcal{B}_n(K_{k,k}) \text{ such that }  A=V(Q)\cap X\}$$ and 
$$\F_Y\isdef\{B|\, \exists Q\in \mathcal{B}_n(K_{k,k}) \text{ such that }  B=V(Q)\cap Y\}.$$  
\medskip 

Since    $\F$ is a maximum intersecting family of $\mathcal{B}_n(K_{k,k})$,
  one  can check that $\F_X$ and $\F_Y$ must be maximum intersecting families of $k$-subsets in $X$ and $Y$, respectively.
   Since $n\geq 2k$, by  Theorem~\ref{thm:EKR} we have $|\F_X|= {n-1\choose k-1}$ and $|\F_Y|= {n-1\choose k-1}$, and consequently,
   $|\F|= {n-1\choose k-1}^2.$  For any $xy\in E(K_{n,n})$, the cardinality of $\{Q\in \mathcal{B}_n(K_{k,k})|\, xy\in E(Q)\}$
   is equal to ${n-1\choose k-1}^2$.   Then, $\mathcal{B}_n(K_{k,k})$ is an  EKR family.

 Now assume that $n> 2k$. By  Theorem~\ref{thm:EKR},  there exist $x\in X$ and $y\in Y$ such that
  $$\F_X=\{A|\, x\in A, |A|=k, \text{and } A\subset X\}$$ and
  $$\F_Y=\{B|\, y\in B, |B|=k, \text{and } B\subset Y\}.$$ 
  Therefore,
    $$\F=\{Q\in \mathcal{B}_n(K_{k,k})|\, xy\in E(Q)\},$$
    that is, all members of $\F$ contains the edge $xy$.   Hence,  $\mathcal{B}_n(K_{k,k})$ is a strong  EKR family.
\end{proof}
\section*{Acknowledgment}
This work is based upon research funded by Iran National Science Foundation
(INSF) under project No.4030010. The authors are deeply grateful to Amir Daneshgar for his helpful comments.


\end{document}